\documentclass[11pt, oneside]{amsart}
 \usepackage[text={5.58in,8.5in},centering,letterpaper,dvips]{geometry}
\usepackage{graphicx}
\usepackage{amsfonts}
\usepackage[dvipsnames]{xcolor}
\usepackage{subcaption}
\usepackage{epsf}
\usepackage{amssymb}
\usepackage{amsmath}
\usepackage{amscd}
\usepackage{pdfpages}
\usepackage{fancyhdr}
\usepackage{setspace}
\usepackage{soul} 
\usepackage[all]{xy}
\usepackage{verbatim}
\usepackage{enumerate}
\usepackage[colorlinks=true, urlcolor=NavyBlue, linkcolor=NavyBlue, citecolor=NavyBlue]{hyperref}

\theoremstyle{theorem}
\newtheorem{theorem}{Theorem}[section]
\newtheorem{proposition}[theorem]{Proposition}
\newtheorem{lemma}[theorem]{Lemma}
\newtheorem{question}[theorem]{Question}



\makeatletter
\newtheorem*{rep@theorem}{\rep@title}
\newcommand{\newreptheorem}[2]{%
\newenvironment{rep#1}[1]{%
 \def\rep@title{#2 \ref{##1}}%
 \begin{rep@theorem}}%
 {\end{rep@theorem}}}
\makeatother

\newreptheorem{theorem}{Theorem}
\newreptheorem{lemma}{Lemma}
\newreptheorem{question}{Question}
\newreptheorem{corollary}{Corollary}
\newreptheorem{proposition}{Proposition}


\theoremstyle{definition}

\newtheorem{remark}[theorem]{Remark}

\newtheorem{example}[theorem]{Example}

\newcommand{\Z}{\mathbb{Z}}

\newcommand{\N}{\mathbb{N}}
\newcommand{\Q}{\mathbb{Q}}

\newcommand{\id}{\text{id}}
\newcommand{\Int}{\text{Int}}

\newcommand{\Cc}{\mathcal C}

\newcommand{\Ll}{\mathcal L}

\newcommand{\Rr}{\mathcal R}
\newcommand{\Ss}{\mathcal S}
\newcommand{\Tt}{\mathcal T}

\newcommand{\X}{\times}

\newcommand{\Sym}{\textsc{Sym}}
\newcommand{\Diff}{\textsc{Diff}}


\makeatletter
\def\@seccntformat#1{%
  \protect\textup{\protect\@secnumfont
    \ifnum\pdfstrcmp{subsection}{#1}=0 \bfseries\fi
    \csname the#1\endcsname
    \protect\@secnumpunct
  }%
}  
\makeatother

\topmargin = -.25in 
\textwidth = 6in
\textheight = 8.75in
\oddsidemargin = .25in
\evensidemargin = 0in
\begin{document}

\rhead{\thepage}
\lhead{\author}
\thispagestyle{empty}


\raggedbottom
\pagenumbering{arabic}
\setcounter{section}{0}


\title{Knots bounding non-isotopic ribbon disks}

\author{Jeffrey Meier}
\address{Department of Mathematics, Western Washington University 
Bellingham, WA 98229}
\email{jeffrey.meier@wwu.edu}
\urladdr{http://jeffreymeier.org} 

\author{Alexander Zupan}
\address{Department of Mathematics, University of Nebraska--Lincoln 
Lincoln, NE 98229}
\email{zupan@unl.edu}
\urladdr{https://math.unl.edu/azupan2}

\begin{abstract}
	We exhibit infinitely many ribbon knots, each of which bounds infinitely many pairwise non-isotopic ribbon disks whose exteriors are diffeomorphic.  This family provides a positive answer to a stronger version of an old question of Hitt and Sumners.
	The examples arise from our main result: a classification of fibered, homotopy-ribbon disks for each generalized square knot $T_{p,q}\# \overline{T}_{p,q}$ up to isotopy.
	Precisely, we show that each generalized square knot bounds infinitely many pairwise non-isotopic fibered, homotopy-ribbon disks, all of whose exteriors are diffeomorphic.  When $q=2$, we prove further that infinitely many of these disks are also ribbon; whether the disks are always ribbon is an open problem.
\end{abstract}

\maketitle

\section{Introduction}\label{sec:intro}

In this paper, we look at the problem of classifying the disks in $B^4$ bounded by a given knot in $S^3$.  Classification can take different forms:  Disks can be considered up to isotopy, up to isotopy rel boundary, or up to the diffeomorphism types of their exteriors.  In 1981, Hitt and Sumners asked the following question to draw out the differences between these notions of equivalence.\vspace{.2cm}

\begin{question}~\cite{HitSum_81}\label{q:1}
Does there exist an infinite family of non-isotopic slice disks in $B^4$ with the same exterior?
\end{question}

In the smooth category, Hitt and Sumners's question was answered affirmatively in dimension $n \geq 6$ by Plotnick~\cite{Plo_83} and $n=5$ by Suciu~\cite{Suc_85}.  Much more recently, Abe and Tange produced a collection $\{R_i\}_{i\in\N}$ of pairwise non-isotopic ribbon disks with diffeomorphic exteriors; however, these disks are distinguished by their boundary knots~\cite{AbeTan_22}.  This suggests a refined version of the question of Hitt and Sumners:

\begin{question}
Does there exist a knot $K \subset S^3$ bounding infinitely many non-isotopic disks in $B^4$ with the same exterior?
\end{question}

A positive answer also answers Question~\ref{q:1} but insists upon the stronger condition that the boundary of each disk is the same knot in $S^3$.  We answer the refined question by proving that infinitely many knots bound a family of such disks.  Rather unexpectedly, this collection of knots includes some of the very simplest ribbon knots, including the square knot, one of the two nontrivial ribbon knots with the smallest possible crossing number.

\begin{theorem}
\label{thm:main}	
Every generalized square knot $T_{p,q} \# \overline{T}_{p,q}$ bounds infinitely many pairwise non-isotopic fibered, homotopy-ribbon disks in $B^4$, all of whose exteriors are diffeomorphic.  In addition, when $q=2$, infinitely many of these disks are ribbon disks.
\end{theorem}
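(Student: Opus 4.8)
The plan is to exploit the fibered structure of $K = T_{p,q}\#\overline{T}_{p,q}$ throughout. First I would record that $K$ is a fibered knot: it is the binding of an open book on $S^3$ whose page is the Seifert surface $F = F_{p,q}\natural F_{p,q}$ (the boundary sum of the two torus-knot fibers) and whose monodromy $\phi$ is the product of the periodic monodromy of $T_{p,q}$ with the inverse periodic monodromy of $\overline{T}_{p,q}$. Capping off the boundary yields a closed surface $\widehat{F}$, and the $0$-surgery $S^3_0(K)=\partial X$ fibers over $S^1$ with fiber $\widehat F$ and monodromy $\phi$, where $X=B^4\setminus\nu(D)$ denotes the exterior of a disk $D$. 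The organizing principle is that a fibered, homotopy-ribbon disk is equivalent data to a compact $3$-manifold $W$ with $\partial W=\widehat F$ together with a monodromy extension $\Phi\colon W\to W$ of $\phi$ realizing $X$ as the mapping torus of $\Phi$; the homotopy-ribbon hypothesis constrains $W$ (forcing it to be a handlebody), and the disk is recovered up to isotopy (rel $\partial B^4$) from $\Phi$ up to the appropriate boundary-fixing conjugacy. This is exactly the classification asserted in the abstract, so I would either invoke it or re-derive the correspondence as the first technical step.

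Second, I would build the family $\{D_n\}_{n\in\Z}$ by modifying the monodromy in the interior of $W$. Concretely, choose an element $\theta\in\mathrm{Diff}(W)$ that restricts to the identity on $\partial W=\widehat F$, that is freely isotopic to $\mathrm{id}_W$, but whose class in the rel-boundary mapping class group $\mathrm{MCG}(W,\partial W)$ has infinite order; the connect-sum sphere coming from the $\natural$-decomposition of $F$, or a properly embedded disk dual to the handle structure, is the natural source of such a $\theta$. Setting $\Phi_n=\theta^{\,n}\circ\Phi_0$ and letting $D_n$ be the corresponding disk, the boundary fibration is unchanged ($\Phi_n|_{\partial W}=\phi$), so each $\partial X_n = S^3_0(K)$ and each $D_n$ bounds the same knot $K$. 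Since $\theta$ is freely isotopic to the identity, $\Phi_n$ is freely isotopic to $\Phi_0$, whence the mapping tori are diffeomorphic: $X_n\cong X_0$ for all $n$. This is the comparatively soft half of the theorem, establishing the common exterior.

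The main obstacle is proving that the $D_n$ are pairwise non-isotopic. Because the disks are fibered and share a common exterior, no homeomorphism invariant of $X_n$ can separate them; the distinction lives entirely in the rel-boundary isotopy class. Here I would argue that an ambient isotopy of $B^4$ carrying $D_m$ to $D_n$ and fixing $S^3=\partial B^4$ must induce the identity on the open book of $S^3_0(K)$, and hence (after straightening the fibration, which is carried by the generator of $H^1(X_n;\Z)\cong\Z$) conjugate $\Phi_m$ to $\Phi_n$ by a diffeomorphism of $W$ fixing $\partial W$ pointwise. The heart of the argument is then a rigidity computation showing the $\Phi_n=\theta^{\,n}\Phi_0$ are pairwise non-conjugate under such boundary-fixing conjugation. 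I would extract a numerical invariant insensitive to the allowed conjugations but changing with $n$ --- for instance the action of $\theta^{\,n}$ on $H_1$ of the appropriate cover, a linking/framing count along the twisting locus, or a fractional-Dehn-twist-type coefficient of the extension --- and show it detects $n$. Verifying that this invariant is genuinely unchanged under all permissible conjugations, while provably growing with $n$, is the crux and the step I expect to require the most care.

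Finally, for the refinement when $q=2$, I would upgrade infinitely many of the $D_n$ from homotopy-ribbon to honestly ribbon. The fiber surface $F_{p,2}$ is especially simple, being built from unknotted, plumbed bands, so the monodromy extension $\Phi_n$ can be realized geometrically by an explicit finite sequence of band moves on $K$. Producing, for infinitely many $n$, a ribbon movie (equivalently a handle decomposition of $X_n$ relative to $\nu(D_n)$ using only $0$- and $1$-handles) exhibits these $D_n$ as ribbon disks. The content here is combinatorial: translating the abstract interior twist $\theta^{\,n}$ into an explicit symmetric band presentation and checking it has no local maxima, which the $q=2$ geometry makes tractable.
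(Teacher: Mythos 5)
Your argument for non-isotopy has a fatal flaw: it proves the wrong statement. You only consider ambient isotopies of $B^4$ ``fixing $S^3=\partial B^4$,'' i.e.\ isotopy \emph{rel-boundary}, but the theorem requires disks that are non-isotopic in the unrestricted sense. A general isotopy carrying $D_m$ to $D_n$ restricts on $S^3$ to an arbitrary element of $\Sym(S^3,Q)$, not the identity, and this group contains a twist $\tau$ (supported near the decomposing sphere of $Q$) that extends over $B^4$ and acts nontrivially on the family of fibered, homotopy-ribbon disks (Proposition~\ref{prop:null}). Worse, your construction produces exactly the family that this action kills: a diffeomorphism $\theta$ of the handlebody $W$ that is the identity on $\partial W$, freely isotopic to $\id_W$, and of infinite order in the rel-boundary mapping class group is precisely what generates the $n$-twist ribbon disks (the disks $D_{2n/1}$), and these are pairwise \emph{isotopic} even though they are pairwise non-isotopic rel-boundary (Proposition~\ref{prop:rel-boundary}, with $\overline\tau$ shifting the twist parameter by Proposition~\ref{prop:null}). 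So the ``rigidity computation'' you hope for cannot exist for your family: any quantity attached to $\Phi_n=\theta^n\Phi_0$ that is invariant under boundary-fixing conjugation and grows with $n$ is, for that very reason, not an invariant of the isotopy class of the disk. The paper's infinitely many isotopy classes come not from twisting the numerator but from varying the handlebody extension itself --- the slope denominators in $D_{2/1}, D_{2/3}, D_{2/5},\dots$ --- and the proof requires first computing $\Sym(S^3,Q)=\langle\alpha,\beta,\tau\rangle$ (via uniqueness of the decomposing sphere and Litherland's computation of the ball-pair mapping class groups), extending these generators over $B^4$, determining their action on slopes, and only then quotienting the rel-boundary classification of Theorem~\ref{thm:MZ_disks} by this action to obtain Theorem~\ref{thm:class}.

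Two further gaps. First, the theorem asserts the disks lie in $B^4$, but your construction a priori produces disks in homotopy 4-balls: regluing $\nu(D_n)$ to the mapping torus $X_n$ depends on the meridian and its framing, which a free-isotopy diffeomorphism $X_n\cong X_0$ need not preserve, so you cannot conclude the ambient ball is standard. In the paper this is part (2) of Theorem~\ref{thm:MZ_disks}, a substantial result of the prior work, not a formality. Second, for $q=2$ the paper does not extract ribbon movies from the monodromy; it uses R-link calculus --- stable handleslide triviality, the Gompf--Scharlemann--Thompson result on 2-component R-links with an unknotted component, Abe--Tange's lemma, and the fact that $\Ll_{2/(2m+1)}$ contains an unknot --- and it remains open whether the $D_{c/d}$ are ribbon in general, so your expectation that the combinatorics is ``tractable'' by hand is optimistic; in any case this point is moot until the first gap is repaired.
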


While it is straightforward to find knots that bound infinitely many ribbon disks that are pairwise inequivalent when considered up to isotopy rel-boundary, these examples tend to be isotopic.  For instance, every ribbon knot of the form $K \# \overline{K}$ has an infinite family of such disks.  Although distinguishing ribbon disks with the same boundary up to isotopy is rather difficult, there are some examples.
In 1991, Akbulut constructed a pair of non-isotopic ribbon disks for a fixed knot~\cite{Akb_91}, and Hayden has recently given infinitely many such pairs of knots disks~\cite{Hay_20}.  These examples have the interesting feature that the pairs of disks have exteriors that are homeomorphic, but not diffeomorphic.  In contrast, we construct infinitely many pairwise non-isotopic ribbon disks with fixed boundary whose exteriors are diffeomorphic.

Below, we state two more technical theorems that imply the first statement in Theorem~\ref{thm:main}.  The first of these theorems is taken from~\cite{MeiZup_22}, and we prove the second in Section~\ref{sec:proof}.  For $p>q>1$, let $T_{p,q}$ denote the $(p,q)$--torus knot.
Let $Q_{p,q} = T_{p,q}\# \overline{T}_{p,q}$, where $\overline T_{p,q}$ is mirror of $T_{p,q}$.
We call $Q_{p,q}$ a \emph{generalized square knot}.
In previous work~\cite{MeiZup_22}, the authors described the set of all fibered, homotopy-ribbon disks bounded by $Q_{p,q}$, up to isotopy rel-boundary.

Recall that a \emph{homotopy-ribbon disk} bounded by a knot $(S^3,K)$ is a pair $(B,D)$, where $B$ is a homotopy 4--ball $B$ and $D\subset B$ is a neatly embedded disk such that $\partial(B,D) = (S^3,K)$ and there is a surjection
$$\pi_1(S^3\setminus K)\twoheadrightarrow\pi_1(B\setminus D).$$
Homotopy-ribbon disks were first studied by Casson and Gordon, who established a beautiful connection with fibered knots~\cite{CasGor_83}.
The relevant prior work can be summarized as follows.

\begin{theorem}\cite[Theorem~1.6]{MeiZup_22}
\label{thm:MZ_disks}
	Fix a generalized square knot $Q = Q_{p,q}$.
	There exists an infinite family
	$$\Rr = \{(B_{c/d},D_{c/d}) \,\vert\, c/d\in\Q, \,\text{$c$ even}\}$$
	of fibered, homotopy-ribbon disks bounded by $(S^3,Q$) such that the following hold:
	\begin{enumerate}
		\item Every fibered, homotopy-ribbon disk $(B,D)$ bounded by $(S^3,Q)$ is isotopic rel-boundary to a member of $\Rr$.
		\item Each $B_{c/d}$ is diffeomorphic to $B^4$.
		\item The members of $\Rr$ are pairwise distinct up to isotopy rel-boundary.
		\item The members of $\Rr$ have diffeomorphic exteriors.
		\item $(B_0, D_0)$ is the product ribbon disk $(S^3, T_{p,q})^\circ\times I$.
	\end{enumerate} 
\end{theorem}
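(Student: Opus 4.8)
The plan is to recast the classification through the Casson--Gordon correspondence between fibered homotopy-ribbon disks and handlebody extensions of the monodromy. First I would record the fibered structure of $Q = Q_{p,q}$: since $T_{p,q}$ is fibered with fiber surface $F_{p,q}$ of genus $g = (p-1)(q-1)/2$ and periodic monodromy of order $pq$, the connected sum $Q = T_{p,q}\#\overline T_{p,q}$ is fibered with fiber $F = F_{p,q}\natural F_{p,q}$, and its monodromy is assembled from the two order-$pq$ periodic monodromies of the factors; capping the boundary yields a closed fiber $\widehat F$ of genus $2g$ carrying a finite-order symmetry $\widehat\phi$. By Casson--Gordon, the exterior $W$ of any \emph{fibered} homotopy-ribbon disk bounded by $Q$ fibers over $S^1$ with fiber a handlebody $V$ satisfying $\partial V = \widehat F$, onto which $\widehat\phi$ extends, and conversely each such $\widehat\phi$-equivariant handlebody filling produces a fibered homotopy-ribbon disk. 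The whole problem thus reduces to classifying $\widehat\phi$-invariant handlebody fillings $V$ of $\widehat F$, with two fillings yielding disks isotopic rel-boundary exactly when they are related by a $\widehat\phi$-equivariant isotopy fixing $\widehat F$. Under this dictionary the product disk corresponds to the vertical filling $V_0 = F_{p,q}\times I$, giving property (5) with $c/d=0$.

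Next I would construct the family by a boundary-twisting operation engineered to preserve the exterior. The connected-sum decomposition of $Q$ gives a $\widehat\phi$-invariant separating curve $\delta$ on $\widehat F$, hence a vertical torus $\delta\times S^1$ inside $\partial W = S^3_0(Q)$. Self-diffeomorphisms of $\partial W$ supported near $\delta\times S^1$, given by twisting with rational slope $c/d$, generally do not extend over $W$; regluing the tubular neighborhood of the disk to the \emph{fixed} exterior $W$ by such a twist therefore produces a new disk $(B_{c/d},D_{c/d})$, and all members share the single exterior $W$, which is property (4) by construction. The evenness of $c$ is the condition ensuring that the reglued boundary is again the $0$-framed boundary of a disk bounded by $Q$ rather than some other knot or framing. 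For property (2) I would run a direct Kirby-calculus computation: because the twist region is standard, each $B_{c/d}$ simplifies to $B^4$.

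The heart of the argument, and the step I expect to be hardest, is property (1): that every $\widehat\phi$-invariant handlebody filling of $\widehat F$ is equivariantly isotopic to some $V_{c/d}$. Here I would exploit the finite symmetry coming from the factors, putting an arbitrary invariant filling into a normal form adapted to the $\mathbb Z/pq$-action by an equivariant (Nielsen-realization--type) handlebody argument; the quotient orbifold together with the induced compressing-disk system should then cut the filling down to a one-parameter family indexed by the single slope $c/d$. Carrying this out rigorously is delicate precisely because the handlebody group of the genus-$2g$ surface $\widehat F$ is large, and the entire force of the proof lies in using the symmetry to show that no exotic invariant filling escapes the twist family.

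Finally, for property (3) I would produce an invariant of the pair $(W, \partial W)$ that is unchanged under isotopy rel-boundary --- equivalently, under a self-diffeomorphism of $W$ restricting to the identity on $S^3_0(Q)$ --- and that records the slope $c/d$. A natural candidate is an equivariant linking/framing number read off from how the meridian disks of $V_{c/d}$ sit relative to the longitudinal framing across the twist region, or equivalently the effect of the regluing on the $\mathbb Z[\mathbb Z]$-module structure of the exterior. The twisting construction makes the computation of this invariant on $D_{c/d}$ direct, and showing the values are pairwise distinct then separates the members of $\Rr$; the only subtlety to verify is that the quantity descends to a genuine rel-boundary invariant rather than merely an invariant of the unoriented diffeomorphism type of $W$.
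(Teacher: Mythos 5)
First, a framing remark: the statement you were asked to prove is not actually proved in this paper --- it is imported wholesale from~\cite{MeiZup_22}, with only the construction of the disks recalled in Section~\ref{sec:background} --- so your proposal must be measured against that work. Your first paragraph does capture its skeleton: the Casson--Gordon correspondence reduces the classification of fibered, homotopy-ribbon disks to the classification of handlebody extensions of the closed periodic monodromy $\widehat\varphi$, carried out equivariantly via the pillowcase quotient, with slopes $\lambda_{c/d}$, $c$ even, indexing the fillings. However, your second paragraph contains a gap that breaks the construction itself. Regluing $\nu(D)$ to a fixed exterior $W$ by twists along the vertical torus $\delta\times S^1$ cannot produce the family $\Rr$: such twists change the slope only by $c/d\mapsto (c\pm 2d)/d$ (this is exactly the effect of $\tau$ recorded in Proposition~\ref{prop:null}(3), i.e.~\cite[Lemma~6.3]{MeiZup_22}), so starting from the product filling $c/d=0$ you reach only the integer slopes $2n/1$ --- and those disks are the $n$--twist ribbon disks, which are all \emph{isotopic} to $D_0$ (Remark~\ref{rmk:twist_disks}). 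The disks with $d>1$, which are what make $\Rr$ infinite up to isotopy, correspond to genuinely different handlebody fillings of $\widehat F$; in~\cite{MeiZup_22} they are produced by lifting each pillowcase slope $\lambda_{c/d}$ to the multicurve $\Ll_{c/d}\subset\widehat F$ and building $(B_{c/d},D_{c/d})$ from the R-link $Q\cup\Ll_{c/d}$ (Section~\ref{subsec:construction}), not by boundary twisting.

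Second, property (2) cannot be dispatched by ``a direct Kirby-calculus computation.'' Standardness of the homotopy $4$--balls $B_{c/d}$ is the central theorem of~\cite{MeiZup_22} (equivalently, standardness of the Gompf--Scharlemann--Thompson homotopy $4$--spheres), and no handle-calculus trivialization of these manifolds is known: Section~\ref{subsec:GPR} of the present paper states explicitly that it is unknown whether the R-links $Q\cup\Ll_{c/d}$ satisfy the Stable Generalized Property R Conjecture, which is precisely what such a computation would establish. Finally, for property (3) your candidate invariants do not work as stated: the $\Z[\Z]$--module structure of the exterior is an invariant of the exterior alone, and by property (4) all of these exteriors are diffeomorphic, so no invariant of $W$ by itself can separate members of $\Rr$ --- the subtlety you flag at the end is not a side issue but the entire difficulty. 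What actually distinguishes the disks rel-boundary is the Casson--Gordon dictionary itself: a rel-boundary isotopy of disks induces, on the fixed fiber $\widehat F\subset S^3_0(Q)$, an isotopy between the multicurves of compressing curves of the two handlebody fillings, and distinct slopes $c/d\neq c'/d'$ have non-isotopic lifts $\Ll_{c/d}$ and $\Ll_{c'/d'}$ in $\widehat F$.
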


In light of part (2), we will henceforth write the set of fibered, homotopy-ribbon disks bounded by $Q$ as
$$\Rr = \{D_{c/d}\,\vert\, c/d\in\Q,\, \text{$c$ even}\}.$$
The new achievement of the present article is the following.

\begin{theorem}
\label{thm:class}
	Fix a generalized square knot $Q=Q_{p,q}$, and let $\Rr$ be the collection of all fibered, homotopy-ribbon disks bounded by $Q$.
	Two members $D_{c/d}$ and $D_{c'/d'}$ of $\Rr$ are isotopic if and only if
	$$\frac{c'}{d'} = \pm\frac{c+2nd}{d}$$
	for some $n\in\Z$.
\end{theorem}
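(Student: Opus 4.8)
The plan is to compare free isotopy against isotopy rel-boundary, for which Theorem~\ref{thm:MZ_disks}(3) already supplies a complete invariant, namely the parameter $c/d$. Concretely, a free isotopy from $D_{c/d}$ to $D_{c'/d'}$ is an ambient isotopy $\{\Phi_t\}$ of $B^4$ with $\Phi_0=\id$ and $\Phi_1(D_{c/d})=D_{c'/d'}$; restricting to $S^3=\partial B^4$ produces an ambient isotopy $\{\phi_t\}$ of $S^3$ for which $t\mapsto\phi_t(Q)$ is a loop of knots based at $Q$ and $\phi_1\in\text{Diff}^+(S^3,Q)$. The effect of such an isotopy on rel-boundary classes is therefore governed by two pieces of data: the class of $\phi_1$ in the orientation-preserving symmetry group $\Sym(S^3,Q)$, and the homotopy class of the loop $t\mapsto\phi_t(Q)$ in the space of knots isotopic to $Q$, which drags the disk and can alter its rel-boundary class even when $\phi_1$ is isotopic to the identity. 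I would make this correspondence precise and then compute the induced action on the index set $\{c/d\in\Q : c\text{ even}\}$, the aim being to show it is exactly the infinite dihedral action generated by $c/d\mapsto -c/d$ and $c/d\mapsto c/d+2$.

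For the ``if'' direction I would realize these two generators by explicit isotopies and compute their effect on the parameter using the description of $D_{c/d}$ from \cite{MeiZup_22}. Invertibility of the torus knots makes $Q=T_{p,q}\#\overline T_{p,q}$ invertible, and the resulting orientation-preserving symmetry reversing the orientation of $Q$ (equivalently, the summand-exchanging reflection corrected to preserve the orientation of $S^3$) should negate the parameter, giving $D_{c/d}\simeq D_{-c/d}$. The translation $c/d\mapsto c/d+2$ I expect to arise from dragging the disk once around the framing loop in the space of knots, i.e.\ a full rotation of a collar neighborhood of $Q$ in $S^3$ carrying $D_{c/d}$ along, which adds a full twist to the disk near its boundary and shifts the framing parameter. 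Verifying that this shift is by exactly $2$ rather than $1$, in agreement with the parity constraint that $c$ be even, is the key computation here, and I would carry it out directly in the surgery/band description of $D_{c/d}$.

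For the ``only if'' direction I would argue that the orientation-preserving symmetry group $\Sym(S^3,Q)$ of a generalized square knot is small, generated up to isotopy by the invertibility symmetry, so that the symmetry data contributes only the sign $\pm$, while the loop data contributes only the cyclic shift, with its generator acting by the $+2$ translation computed above. Combined with the rel-boundary injectivity of the parameter from Theorem~\ref{thm:MZ_disks}(3), this would give the stated equivalence.

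The main obstacle is precisely this necessity direction: ruling out any further coincidences, so that $c/d$ modulo the infinite dihedral action is a complete free-isotopy invariant. One must prove that the loop-dragging action is faithful modulo the single relation given by the $+2$ translation, and that no symmetry beyond invertibility acts nontrivially on the parameter. I anticipate handling this by producing an isotopy invariant of $D_{c/d}$ valued in $\Q$ modulo the infinite dihedral action, for instance extracted from the fibered structure of the disk exterior and its monodromy via the Casson--Gordon correspondence, or from a suitable linking refinement of the boundary $S^3_0(Q)$ data, that recovers $c/d$ exactly up to $\pm$ and $+2n$. Checking that such an invariant is simultaneously well-defined under free isotopy and sharp enough to separate the remaining classes is the step I expect to demand the most care.
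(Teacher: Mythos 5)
Your overall skeleton---reduce free isotopy to an action of boundary data on the rel-boundary classes supplied by Theorem~\ref{thm:MZ_disks}(3), realize an infinite dihedral action for the ``if'' direction, and show nothing more can act for the ``only if'' direction---matches the paper's strategy, but your ``only if'' direction rests on a false premise. You assert that the symmetry group of $Q$ is ``small, generated up to isotopy by the invertibility symmetry.'' For a composite knot this is wrong: $\Sym(S^3,Q)$ is infinite. The paper's Proposition~\ref{prop:sym} shows it is generated by two involutions $\alpha,\beta$ together with the twist $\tau$ along the decomposing sphere of $Q=T_{p,q}\#\overline{T}_{p,q}$, and $\tau$ has infinite order: it is exactly the element whose extension realizes the translation $c/d\mapsto (c-2d)/d$ in Proposition~\ref{prop:null}, and if $\tau$ were trivial in $\Sym(S^3,Q)$ then $D_{c/d}$ and $D_{(c-2d)/d}$ would be isotopic rel-boundary, contradicting Theorem~\ref{thm:MZ_disks}(3). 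Your proposed split of the boundary data into ``class of $\phi_1$ in $\Sym(S^3,Q)$'' plus ``loop of knots'' does not evade this: since $\pi_0\Diff^+(S^3)$ is trivial, every element of $\Sym(S^3,Q)$ that preserves orientation arises as the endpoint of an isotopy of $S^3$, so the two pieces of data are entangled, and bounding their combined effect is precisely a computation of the group $\Sym(S^3,Q)$. That computation is the technical heart of the paper: uniqueness of the decomposing sphere (Lemma~\ref{lem:decomposing_sphere}) reduces any symmetry to ball-pair automorphisms of the two torus-knot summands, and Litherland's theorem identifies each rel-boundary ball-pair mapping class group as infinite cyclic, generated by the twist. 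You offer no substitute for this step; the fallback you name---a $\Q$-valued isotopy invariant modulo the dihedral action extracted from Casson--Gordon theory or linking data---is not constructed, and it is exactly the deferred step that carries all the difficulty.

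A secondary problem is your geometric guess for the translation generator. A ``full rotation of a collar neighborhood of $Q$'' is a twist along the peripheral torus $\partial\nu(Q)$; such a twist is isotopic to the identity through diffeomorphisms of the pair $(S^3,Q)$ (rotate the solid torus $\nu(Q)$ in the meridional or longitudinal direction and damp off across the collar), so it acts trivially on rel-boundary classes---and within your own framework it produces a constant loop of knots, hence trivial loop data. Moreover $c/d$ is not a framing parameter: every $D_{c/d}$ is the core of a $0$-framed 2-handle. The correct generator is the twist $\tau$ along the connect-sum sphere, whose untwisting isotopy rotates one summand about the axis through the two points of $S\cap Q$ (Zeeman's twist-spinning rotation), and whose effect $\overline\tau(D_{c/d})=D_{(c-2d)/d}$ is computed in~\cite{MeiZup_22} and packaged in Proposition~\ref{prop:null}, together with the extension of all generators over $B^4$ via framed-link preservation and Laudenbach--Po\'enaru. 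With your proposed generator, the key computation in your ``if'' direction would return the trivial shift, and the argument would not produce the claimed dihedral action.
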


Taken together, these two theorems yield the first claim of Theorem~\ref{thm:main}.  The proof of Theorem~\ref{thm:class} combines Theorem~\ref{thm:MZ_disks} with an analysis of the mapping class group $\Sym(S^3,Q)$ of isotopy classes of automorphisms of $S^3$ that preserve $Q$ setwise.
We remark that by Proposition~\ref{prop:equiv} below, two disks $D$ and $D'$ in $B^4$ are isotopic (resp., isotopic rel-boundary) if and only if the pairs $(B^4,D)$ and $(B^4,D')$ are diffeomorphic (resp., diffeomorphic rel-boundary).  It follows that the above classifications up to isotopy and isotopy rel-boundary also give classifications of pairs up to diffeomorphism and diffeomorphism rel-boundary.

It is an open question whether every ribbon disk bounded by a fibered knot is a fibered, ribbon disk~\cite[Question~1.4]{Mil_21}.
If this question has an affirmative answer, then $\Rr$ contains all ribbon disks bounded by $Q$.

By Theorem~\ref{thm:MZ_disks}, the homotopy-ribbon disks $D_{c/d}$ are handle-ribbon, meaning their exteriors can be built with 4--dimensional handles of index 0, 1, and 2.
(This condition is sometimes called \emph{strongly homotopy-ribbon} in the literature; see the discussion in~\cite{MilZup_20}.)
In certain cases, it can be shown that the disks are ribbon.
The simplest instance of this is the case $d=1$:
Any two disks $D_{2n/1}$ and $D_{2m/1}$ are isotopic, and $D_0$ is the product ribbon disk.
In Section~\ref{sec:background}, we connect these disks with the twist-spinning construction introduced by Zeeman~\cite{Zee_65}. 

More interesting is the following theorem, which considers the disks arising in the cases $q=2$ and $c=2$.

\begin{theorem}
\label{thm:ribbon}
	Fix a generalized square knot $Q=Q_{2k+1,2}$.
	For all $m\in\Z$, the disk $D_{2/(2m+1)}$ bounded by $Q$ is ribbon.
\end{theorem}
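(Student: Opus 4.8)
The plan is to prove Theorem~\ref{thm:ribbon} by producing, for each $m \in \Z$, an explicit ribbon disk bounded by $Q = Q_{2k+1,2}$ and then identifying its isotopy class with that of $D_{2/(2m+1)}$. The subtlety is that Theorem~\ref{thm:MZ_disks} only tells us that the members of $\Rr$ are handle-ribbon, and a handle-ribbon disk need not be ribbon a priori; indeed, that the disks $D_{c/d}$ are always ribbon is exactly the open problem mentioned above. So the substantive task is to exhibit a genuine ribbon structure for the relevant members -- that is, a band presentation, equivalently an immersed disk in $S^3$ carrying only ribbon singularities, whose boundary is $Q$ and whose push-in realizes the correct member of the classified family.

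First I would exploit the special structure available when $q=2$. The knot $T_{2k+1,2}$ is the $(2,2k+1)$--torus knot, whose fiber is a plumbing of $2k$ Hopf bands, so $Q = T_{2k+1,2}\#\overline{T}_{2k+1,2}$ carries a very symmetric genus-$2k$ fiber surface, and the product ribbon disk $D_0$ is cut out by the evident fold of this surface across the connect-sum sphere. I would then build a one-parameter family of ribbon presentations by inserting $2m+1$ twists into this fold -- a move that, in the spirit of the twist-spinning picture of Section~\ref{sec:background}, can be organized as a relative twisting of the product disk. The point of restricting to $c=2$ and to odd $d = 2m+1$ is that this particular twisting is realizable by a ribbon immersion in $S^3$: it can be traded for saddle (band) moves without forcing a local maximum, whereas the naive analogue for general $c/d$ would require capping off, which is precisely the obstruction that remains open. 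This produces, for each $m$, an explicit band presentation $R_m$ of $Q$ whose associated slice disk is ribbon by construction.

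Having constructed $R_m$, I would check that it is fibered and homotopy-ribbon -- this should follow from the fact that its exterior is built, via the band presentation, with handles of index $0$, $1$, and $2$, together with the fibered structure inherited from the plumbing surface -- so that by Theorem~\ref{thm:MZ_disks}(1) the disk $R_m$ is isotopic rel-boundary to some $D_{c'/d'} \in \Rr$. It then remains to compute the parameter $c'/d'$ and confirm $c'/d' = 2/(2m+1)$ modulo the equivalence of Theorem~\ref{thm:class}. I expect this identification to be the main obstacle: the invariant $c/d$ attached to a member of $\Rr$ is defined intrinsically through the fibered and monodromy data of~\cite{MeiZup_22}, while $R_m$ is described geometrically by its bands, so matching the two requires tracking how the inserted twisting acts on the curve whose slope records $c/d$ -- for instance by reading that slope off the handle-ribbon exterior of $R_m$, or by computing the image of the relevant curve under the action of $\Sym(S^3,Q)$ and comparing with the normal form. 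Once this is done, Theorem~\ref{thm:class} shows that the denominators $2m+1$ survive in lowest terms, so the ribbon disks $D_{2/(2m+1)}$ give infinitely many pairwise non-isotopic ribbon disks with fixed boundary and diffeomorphic exteriors, yielding the second claim of Theorem~\ref{thm:main}.
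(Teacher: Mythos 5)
There is a genuine gap, and it sits exactly where you acknowledge ``the main obstacle'' -- but the problem is worse than an unfinished computation. Your construction of $R_m$ by ``inserting $2m+1$ twists into the fold'' of the product disk is, as described, a twist-spinning-style operation: tracing an ambient rotation of the knotted arc across an interval. The paper shows (Proposition~\ref{prop:rel-boundary}, Remark~\ref{rmk:twist_disks}, Proposition~\ref{prop:null}(3)) that \emph{every} disk obtained this way is one of the $n$--twist ribbon disks $D_{2n/1}$, and these are all isotopic to the product disk $D_0$. Since Theorem~\ref{thm:class} says $D_{2/(2m+1)}$ is \emph{not} isotopic to $D_0$ for $m\neq 0$ (the denominator $d$ is an isotopy invariant of the family), no construction of this type can reach the disks you need. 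You gesture at a ``relative twisting'' that is supposed to be different, but you never define it, and defining it so that it (a) yields a ribbon immersion and (b) realizes a disk with $d=2m+1$ is precisely the content of the theorem. Your subsequent steps compound the issue: the claim that $R_m$ is fibered because its exterior has a handle decomposition with indices $0,1,2$ is false (handle-ribbon does not imply the exterior fibers over $S^1$), so you cannot invoke Theorem~\ref{thm:MZ_disks}(1) to place $R_m$ in $\Rr$; and the slope identification $c'/d'=2/(2m+1)$ is never carried out.

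The paper's proof avoids all of this by never constructing an explicit ribbon disk in the general case. It runs: (i) Lemma~\ref{lem:cd_equiv} says that for any nonempty sublink $L\subset\Ll_{c/d}$, the link $Q\cup L$ is an R-link stably equivalent to $Q\cup\Ll_{c/d}$ via handleslides preserving $Q$, so the $2$--handle core along $Q$ in $B_{Q\cup L}$ \emph{is} $D_{c/d}$ -- this is what solves the identification problem you left open; (ii) Theorem~\ref{thm:zupanetal} (forthcoming work of the second author et al.) supplies an unknotted component $U$ of $\Ll_{2/(2m+1)}$ when $q=2$; (iii) Proposition~\ref{prop:2R} (Gompf--Scharlemann--Thompson) says the $2$--component R-link $Q\cup U$ is stably handleslide trivial; and (iv) Proposition~\ref{prop:AT} (Abe--Tange) then says the $2$--handle core, namely $D_{2/(2m+1)}$, is ribbon. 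Your proposal has no substitute for the key external input (ii); the assertion that for $c=2$, $d$ odd the twisting ``can be traded for saddle moves without forcing a local maximum'' is exactly the kind of claim that input exists to justify, and it is not something one can wave through.
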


We derive this theorem in Section~\ref{sec:ribbon} using forthcoming work that examines the knot types appearing among the attaching links for handle-decompositions $B_{c/d}$~\cite{zupanetal}.  Combining Theorems~\ref{thm:class} and~\ref{thm:ribbon} yields the second claim in Theorem~\ref{thm:main}.  Additionally, we exhibit band presentations for the ribbon disks $D_{2/(2m+1)}$ bounded by the square knot $Q_{3,2}$; see Figure~\ref{fig:bands} and Example~\ref{ex:bands}.
Finally, we verify ribbonness for an additional, sporadic example.

\begin{proposition}
\label{prop:4/5}
	The disk $D_{4/5}$ bounded by $Q_{3,2}$ is ribbon.
\end{proposition}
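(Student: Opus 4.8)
The plan is to upgrade the handle-ribbon structure on $D_{4/5}$ to a genuine ribbon structure by an explicit, by-hand analysis, just as in the proof of Theorem~\ref{thm:ribbon}, but now for a case that lies outside the uniform family $c=2$. By Theorem~\ref{thm:MZ_disks}, the disk $D_{4/5}$ is handle-ribbon: its exterior in $B^4$ admits a handle decomposition using only handles of index $0$, $1$, and $2$, and the forthcoming work~\cite{zupanetal} records the attaching link of the $2$--handles of $B_{4/5}$ explicitly. The first step is therefore to write down this diagram concretely for the parameters $(p,q)=(3,2)$ and $c/d=4/5$, giving a completely explicit model of the pair $(B^4,D_{4/5})$ to manipulate.

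The core of the argument rests on the characterization that a neatly embedded disk is ribbon precisely when the radial function restricts to a Morse function with no local maxima, equivalently when the disk arises as the trace of bands attached to a trivial collection of disks (a band presentation). Starting from the explicit model above, I would isotope the attaching data of the $2$--handles into a position in which they geometrically cancel the $1$--handles through embedded bands in $S^3$, and then read the resulting saddle moves as a movie that builds $D_{4/5}$ from an unlink with no maxima. This is the same mechanism that produces the band pictures for the disks $D_{2/(2m+1)}$ in Figure~\ref{fig:bands}; the difference is that the $c=4$ attaching link is more complicated than in the $c=2$ case, so the required isotopy cannot be extracted from the uniform pattern and must be found directly.

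The main obstacle is precisely this last step: locating, by hand, the isotopy of the attaching data that exposes the cancelling bands. Since every move in this process is either an isotopy of the fixed pair $(B^4,D_{4/5})$ or a handle cancellation that does not alter its isotopy class, the output is automatically $D_{4/5}$ and no separate identification of the resulting disk is needed. As a consistency check, one may confirm via Theorem~\ref{thm:class} that $4/5$ is not of the form $\pm(2+10n)/5$ for any $n\in\Z$, so that $D_{4/5}$ is genuinely not isotopic to any of the disks $D_{2/(2m+1)}$ handled by Theorem~\ref{thm:ribbon}; this is exactly why the example falls outside the scope of that theorem and must be verified as a separate, sporadic case.
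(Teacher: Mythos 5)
Your proposal has a genuine gap: it is a plan rather than a proof, and the step you defer is exactly the step that cannot (currently) be done. You write that the main obstacle is ``locating, by hand, the isotopy of the attaching data that exposes the cancelling bands,'' and then you never locate it. That missing step is not a routine computation one can wave at: exhibiting such cancelling bands is equivalent to producing an explicit band presentation for $D_{4/5}$, and the paper states explicitly that this remains an open problem --- it is precisely what the authors were unable to do, which is why their proof is non-constructive. Note also that your framing of the task as ``upgrading'' the handle-ribbon structure to a ribbon structure understates the difficulty: passing from a $0$-, $1$-, $2$-handle decomposition of the exterior to a ribbon presentation amounts to showing the associated R-link is stably handleslide trivial, which is a Generalized Property R--type statement; asserting that the $2$--handles ``geometrically cancel the $1$--handles through embedded bands'' after some isotopy is asserting the conclusion, not proving it. Your appeal to Theorem~\ref{thm:ribbon} as a template does not help here, since that proof does not proceed by hands-on handle cancellation either: it runs through Theorem~\ref{thm:zupanetal} (an unknotted component in $\Ll_{2/(2m+1)}$) together with Proposition~\ref{prop:2R}, and no unknotted component is available in $\Ll_{4/5}$ --- the relevant components are stevedore knots.

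For contrast, the paper's actual argument sidesteps the explicit construction entirely. It reduces, via Lemma~\ref{lem:cd_equiv} and Proposition~\ref{prop:AT}, to showing that the two-component sublink $L_{4/5}=V_{4/5}^1\cup V_{4/5}^2$ is stably handleslide trivial; it then observes that the link $L_{2/3}'=Q\cup V_{2/3}^3$ is stably handleslide trivial (by Lemma~\ref{lem:cd_equiv}, Theorem~\ref{thm:zupanetal}, and Proposition~\ref{prop:2R}); and finally it identifies $L_{4/5}$ with $L_{2/3}'$ by a SnapPy-verified isometry of their hyperbolic exteriors (Lemma~\ref{lem:isotopic}). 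Stable handleslide triviality is an isotopy invariant of the link, so it transfers, and ribbonness follows --- with no band presentation ever produced. If you want to salvage your approach, you would need to either carry out the explicit isotopy (thereby solving the stated open problem) or replace it with an indirect identification of this kind. Your closing consistency check via Theorem~\ref{thm:class} is correct ($4/5$ is not of the form $\pm(2+2n(2m+1))/(2m+1)$), but it only confirms that the case is genuinely outside Theorem~\ref{thm:ribbon}; it does nothing to close the gap.
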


The proof of this proposition is non-constructive and computer-aided, and it remains an open problem to exhibit a band presentation for this ribbon disk.  
We do not know whether the disk $D_{c/d}$ bounded by $Q_{p,q}$ is ribbon in general.  We note in Remark~\ref{rmk:alt} that an alternative proof of Proposition~\ref{prop:4/5} can be obtained by combining results in~\cite{GST} and~\cite{Sch_16}.


\subsection*{Acknowledgements}

The authors express their gratitude to the Max Planck Institute for Mathematics in Bonn, Germany for their hospitality and support during the fall of 2022, when this work began.
The first author was supported by NSF grant DMS-2006029, and the second author was supported by NSF grant DMS-2005518 and a Simons Fellowship.

\section{The disks $D_{c/d}$ and some background}\label{sec:background}

In this section, we recall the construction of the fibered, homotopy-ribbon disks $D_{c/d}$ bounded $Q=Q_{p,q}$ first described in~\cite{MeiZup_22}.  First, we explore various equivalence relations on disks in the 4--ball.

\subsection{Diffeomorphism vs. isotopy of disks}

In Section~\ref{sec:intro}, the results of this paper and prior work were phrased so as to pertain to the problem of classifying disks bounded by a knot up to the equivalence relations of isotopy and isotopy rel-boundary.
It turns out that these equivalence relations are the same as the equivalence relations induced by diffeomorphism and diffeomorphism rel-boundary of pairs.  For completeness, we state a precise version of this that is adapted to our definitions:

Let $D$ and $D'$ be neatly embedded disks in $B^4$.
	Then, the disks $D$ and $D'$ are
	\begin{enumerate}
		\item \emph{isotopic} if there is an ambient isotopy $F\colon B^4\times I \to B^4$ such that $F_1(D)=D'$;
		\item \emph{isotopic rel-boundary} if there is an ambient isotopy $F\colon B^4\times I \to B^4$ such that $F_1(D)=D'$ and $F_t\vert_{S^3}=\id_{S^3}$ for all $t\in I$;
		\item \emph{diffeomorphic} if there is a diffeomorphism $\psi\colon B^4\to B^4$ with $\psi(D)=D'$; and
		\item \emph{diffeomorphic rel-boundary} if there is a diffeomorphism $\psi\colon B^4\to B^4$ with $\psi(D)=D'$ and $\psi\vert_{S^3}=\id_{S^3}$.
	\end{enumerate}

The following is essentially Lemma~2.2 of~\cite{JuhZem_20}, phrased in terms of our definitions.

\begin{proposition}
\label{prop:equiv}
	Let $D$ and $D'$ be neatly embedded disks in $B^4$.
	Then, $D$ and $D'$ are isotopic (resp. isotopic rel-boundary) if and only if the pairs $(B^4,D)$ and $(B^4,D')$ are diffeomorphic (resp. diffeomorphic rel-boundary).
\end{proposition}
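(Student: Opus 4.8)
The plan is to prove the two implications separately; the forward implication is formal, and the reverse carries all of the content. For the forward direction, suppose $F\colon B^4\times I\to B^4$ is an ambient isotopy with $F_1(D)=D'$, and, in the rel-boundary case, with $F_t|_{S^3}=\id_{S^3}$ for every $t\in I$. Then the time-one map $F_1$ is a diffeomorphism of $B^4$ sending $D$ to $D'$, and $F_1|_{S^3}=\id_{S^3}$ in the rel-boundary case, so the pairs $(B^4,D)$ and $(B^4,D')$ are diffeomorphic (rel boundary). This direction requires nothing beyond unwinding the definitions.

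For the reverse direction I would first isolate the following elementary principle: if a diffeomorphism $\psi\colon B^4\to B^4$ with $\psi(D)=D'$ is smoothly isotopic to $\id_{B^4}$ through diffeomorphisms $\psi_t$ with $\psi_0=\id_{B^4}$ and $\psi_1=\psi$ (and, in the rel-boundary case, with $\psi_t|_{S^3}=\id_{S^3}$ for all $t$), then the path $t\mapsto\psi_t$ is itself an ambient isotopy (rel boundary) carrying $D$ to $\psi(D)=D'$. Thus everything reduces to showing that the connecting diffeomorphism may be taken isotopic to the identity. In the rel-boundary case, the condition $\psi|_{S^3}=\id_{S^3}$ forces $\psi$ to be orientation-preserving (its derivative along the boundary fixes the tangent directions to $S^3$ and preserves the inward normal), so no orientation hypothesis is needed there; in the unrestricted case I would assume $\psi$ orientation-preserving, which is the correct hypothesis since every $F_t$ in an ambient isotopy is orientation-preserving. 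To reduce the unrestricted case to the rel-boundary case, I would invoke Cerf's theorem that $\mathrm{Diff}^+(S^3)$ is connected: the restriction $\psi|_{S^3}$ is isotopic within $\mathrm{Diff}^+(S^3)$ to a rotation, and after pushing this boundary isotopy inward with the isotopy extension theorem and composing with the linear extension of that rotation, one arranges $\psi|_{S^3}=\id_{S^3}$ without changing the isotopy class of the image disk.

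The remaining and genuinely substantive step is the rel-boundary assertion itself: if a diffeomorphism of $B^4$ that restricts to the identity on $S^3$ carries $D$ to $D'$, then $D$ and $D'$ are ambiently isotopic rel boundary. This is the content of Lemma~2.2 of~\cite{JuhZem_20}, which I would quote and adapt to the present setting. It is also the main obstacle, since it is precisely here that one must understand when a diffeomorphism of the pair can be promoted to an ambient isotopy of the $4$--ball rel its boundary, the one place in the argument where four-dimensional phenomena could intervene. By contrast, the forward direction, the orientation remark, and the boundary-straightening via Cerf's theorem are routine consequences of the isotopy extension theorem, and I would present them as such, isolating the cited input so that the surrounding reduction reads as elementary.
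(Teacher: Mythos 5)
Your proposal is correct and matches the paper's treatment: the paper offers no argument beyond attributing the statement to Lemma~2.2 of~\cite{JuhZem_20}, and your proof rests on exactly that lemma for the substantive rel-boundary step. The routine reductions you supply --- the trivial forward direction, the Cerf/isotopy-extension straightening of the boundary, and the observation that ``diffeomorphic'' must be read as orientation-preserving (without which mirror disks of chiral slice knots would be counterexamples) --- are precisely what the paper's qualifier ``essentially'' leaves implicit.
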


Next, we illustrate that many knots bound infinitely many ribbon disks that are pairwise non-isotopic rel-boundary.

Let $J\subset S^3$ be any knot, and let $K=J\#\overline{J}$.
Let $(B^3,J^\circ)$ be the knotted arc obtained by puncturing $S^3$ at a point in $J$.
For $t\in[0,2\pi]$, let $\tau_t\colon B^3\to B^3$ denote rotation through $2\pi t$ radians about an axis intersecting $\partial B^3$ in $\partial B^3\cap J^\circ$.

Let $(B^4, D_0) = (B^3,J^\circ)\times I$; we call $D_0$ the \emph{product ribbon disk} with boundary $(S^3,K)$.
Let $(B^4, D_n) = \Tt^n((B^3,J^\circ)\times I))$, where $\Tt(x,t) = (\tau_t^n(x),t)$.
Succinctly, $D_n$ is formed by tracing the isotopy described by twisting the knotted arc $J^\circ$ through $n$ full rotations about its end points across an interval.
We call $D_n$ the $n$--twist ribbon disk for $K$; that these disks are ribbon is a straightforward verification (see, for example, Figure 21(a) of~\cite{MZB}).

\begin{proposition}
\label{prop:rel-boundary}
	Let $J$ be any knot, and let $K=J \# \overline{J}$.
	The $n$--twist ribbon disks bounded by $K$ are pairwise isotopic but pairwise non-isotopic rel-boundary.
\end{proposition}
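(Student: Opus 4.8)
The plan is to handle the two assertions separately, since the first is formal and the second carries the real content. For the isotopy statement, I would exhibit a single diffeomorphism of pairs. By construction $D_n = \Tt^n((B^3,J^\circ)\times I)$, and the twist maps satisfy $\Tt^{a}\circ\Tt^{b}=\Tt^{a+b}$ because the rotations $\tau_t$ share a common axis; hence $\Tt^{m-n}$ is a diffeomorphism of $B^4 = B^3\times I$ carrying $D_n$ to $D_m$. Since the endpoints of $J^\circ$ lie on that axis, $\Tt$ fixes $\partial J^\circ\times I$, so each $D_n$ has boundary $(S^3,K)$ and the map of pairs $(B^4,D_n)\to(B^4,D_m)$ is a genuine diffeomorphism; it is not rel-boundary, as $\Tt^{m-n}$ rotates the equatorial $S^2\times I\subset\partial B^4$. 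Applying Proposition~\ref{prop:equiv} to this diffeomorphism of pairs yields that $D_n$ and $D_m$ are isotopic, establishing the first claim.

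For the rel-boundary statement I would pass to closed $2$–knots. Cap off $B^4$ by a second $4$–ball containing the fixed reflected product disk $\overline{D_0}$, glued along the common boundary $(S^3,K)$, to produce a $2$–sphere $\Sigma_n\subset S^4$. The point is that a rel-boundary isotopy between $D_n$ and $D_m$ fixes $S^3$ pointwise and so extends by the identity across the second ball to an ambient isotopy of $S^4$ carrying $\Sigma_n$ to $\Sigma_m$; thus it suffices to distinguish the spheres $\Sigma_n$. The geometric input here, which belongs to the twist-spinning discussion of this section, is the identification of $\Sigma_n$ with Zeeman's $n$–twist-spun $2$–knot $\tau^n(J)$~\cite{Zee_65}: the product disk provides the untwisted hemisphere and $D_n$ supplies $n$ full rotations of the arc as one spins, which is exactly the twist-spinning recipe.

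The hard part will be showing that the twist-spins $\tau^n(J)$ are pairwise non-isotopic as $n$ varies. For this I would invoke Zeeman's fibration theorem: $\tau^n(J)$ is fibered with fiber the punctured $n$–fold cyclic branched cover $\Sigma_n(J)\setminus B^3$ and monodromy the covering translation, so the fiber, and in particular $\pi_1(\Sigma_n(J))$, is an isotopy invariant separating the $\tau^n(J)$. The cleanest uniform invariant is the Alexander module of $\tau^n(J)$ over $\Z[t^{\pm1}]$, computed from the branched-cover homology, whose size grows with $n$ whenever $\Delta_J\neq 1$; this immediately separates all the $\Sigma_n$ and hence shows the $D_n$ are pairwise non-isotopic rel-boundary. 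I expect the genuine obstacles to be the two degenerate regimes: knots with trivial Alexander polynomial, where one must replace the module invariant by the finer fundamental group of the branched cover, and the comparison of $\tau^{n}(J)$ with $\tau^{-n}(J)$, where the fibers agree and one must appeal to the orientation of the monodromy (chirality) to separate them. These are exactly the cases where the twisting becomes inessential — for the unknot all $D_n$ coincide — so the statement should be understood for nontrivial $J$, and the argument above applies verbatim once the twist-spins are separated.
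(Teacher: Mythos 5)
Your proof of the first assertion is correct and is essentially a fleshed-out version of the paper's one-line ``isotopic by construction'': $\Tt^{m-n}$ is a diffeomorphism of pairs $(B^4,D_n)\to(B^4,D_m)$, and Proposition~\ref{prop:equiv} upgrades this to an isotopy.

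The rel-boundary assertion is where the proposal breaks, and the failure lies in the reduction itself, not in the ``hard part'' you flagged at the end. You cap every $D_n$ with the \emph{fixed} disk $\overline{D}_0$ and aim to distinguish the closed 2--knots $\Sigma_n=D_n\cup\overline{D}_0=\tau^n(J)$ pairwise. That is impossible: by Zeeman's theorem~\cite{Zee_65} both the $1$--twist spin and the $(-1)$--twist spin of $J$ are unknotted, so $\Sigma_1$ and $\Sigma_{-1}$ are isotopic, and consequently no invariant of the closed sphere --- Alexander module, branched-cover fundamental group, fiber, ``chirality of the monodromy'' --- can ever show that $D_1$ and $D_{-1}$ fail to be isotopic rel-boundary. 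More generally, reflecting $S^4$ through the equator and untwisting shows that $\tau^{-n}(J)$ is the mirror of $\tau^{n}(J)$, so the sign ambiguity you hoped to resolve by an orientation argument is intrinsic to the invariant, not a technicality. Since the proposition asserts that these pairs of disks \emph{are} non-isotopic rel-boundary, your reduction discards exactly the information needed.

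The paper's proof avoids this by capping with the \emph{varying} disk: if $D_n$ were isotopic rel-boundary to $D_m$, then $\Ss=D_n\cup_K\overline{D}_m$ would be isotopic to the double $D_m\cup_K\overline{D}_m$ of a ribbon disk, hence a \emph{ribbon} 2--knot; on the other hand, an isotopy supported near the equator identifies $\Ss$ with $D_{n-m}\cup\overline{D}_0=\tau^{n-m}(J)$. The obstruction is then ribbonness rather than the isotopy class of a twist spin: Cochran proved that a non-trivial twist-spun knot is never ribbon~\cite[Corollary~3.1]{Coc_83}, which forces $n=m$. Note the two structural advantages: the rel-boundary hypothesis is converted into a \emph{property} (being a ribbon-disk double) of a single sphere, and only the difference $n-m$ appears, so the $\pm$ problem never arises. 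This also sidesteps your $\Delta_J=1$ worry, since Cochran's obstruction applies uniformly to all non-trivial $J$. (Your observation that the statement is vacuous for the unknot --- where all the $D_n$ are rel-boundary isotopic --- is fair, and applies equally to the paper's formulation.)
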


\begin{proof}
	First, note that $D_n$ and $D_m$ are isotopic by construction.
	Conversely, suppose $D_n$ is isotopic rel-boundary to $D_m$ for some $n,m\in\Z$.
	Then, the union $\Ss = D_n\cup_K\overline D_m$ is a knotted sphere in $S^4$ that is the double of a ribbon disk -- hence a ribbon 2--knot.
	After an isotopy supported near the equator $S^3$, we have that $\Ss$ is isotopic to the union $D_{n-m}\cup\overline D_0$.
	However, this means that $\Ss$ is the $(n-m)$--twist spin of $J$, as defined by Zeeman~\cite[Section~6]{Zee_65}.
	But Cochran has shown that a non-trivial twist spun knot cannot be ribbon~\cite[Corollary~3.1]{Coc_83}.
	It follows that $n=m$.
\end{proof}

Families of $n$--twist ribbon disks arise in the context of our constructions; see Remark~\ref{rmk:twist_disks} below.




\subsection{The construction of the disks $D_{c/d}$}
\label{subsec:construction}
\

An \emph{R-link} is an $n$--component link $L$ in $S^3$ such that there exists a Dehn surgery on $L$ yielding $\#^n(S^1 \X S^2)$.  Every R-link $L$ gives rise to a homotopy 4--ball $B_L$ obtained by attaching $n$ 2--handles to $S^3 \X I$ along $L$ and capping off the resulting $\#^n(S^1 \X S^2)$ boundary component with $n$ 3--handles and a 4--handle.

Fix $p>q>1$, and let $Q=Q_{p,q}$.
The generalized square knot $Q$ is fibered, with fiber surface $F$ of genus $g=(p-1)(q-1)$ and monodromy $\varphi\colon F\to F$.
Let $Y_Q = S^3_0(Q)$ be the result of 0--framed Dehn surgery on $Q$, so $Y_Q$ is a closed surface-bundle over $S^1$ with fiber $\widehat F = F\cup D^2$ and monodromy $\widehat\varphi = \varphi\cup\id$.
The map $\widehat\varphi$, which we refer to as the \emph{closed monodromy} of $Q$, is periodic of order $pq$, up to isotopy.
Thus, $\widehat\varphi$ generates a $\Z_{pq}$ action on $\widehat F$, and the quotient of this action a the 2--sphere $S$.
This quotient map is a $pq$--fold cyclic covering with four branched points, two of order $p$ and two of order $q$, so we regard $S$ as a pillowcase.
By cutting $\widehat F$ open along a pair of graphs, we can represent $\widehat F$ as a polygonal annulus with edge identifications; in this representation, $\widehat\varphi$ acts by rotation through $2\pi/pq$ radians.  This geometric set-up is shown in Figure~\ref{fig:curves_0} in the case of $Q = Q_{3,2}$.
We refer the reader to~\cite[Section~4]{MeiZup_22} for complete details (with additional examples in the $Q = Q_{3/2}$ case appearing in~\cite{Sch_16} and~\cite{MZ_Dehn}).

\begin{figure}[ht!]
	\centering
	\includegraphics[width=.9\textwidth]{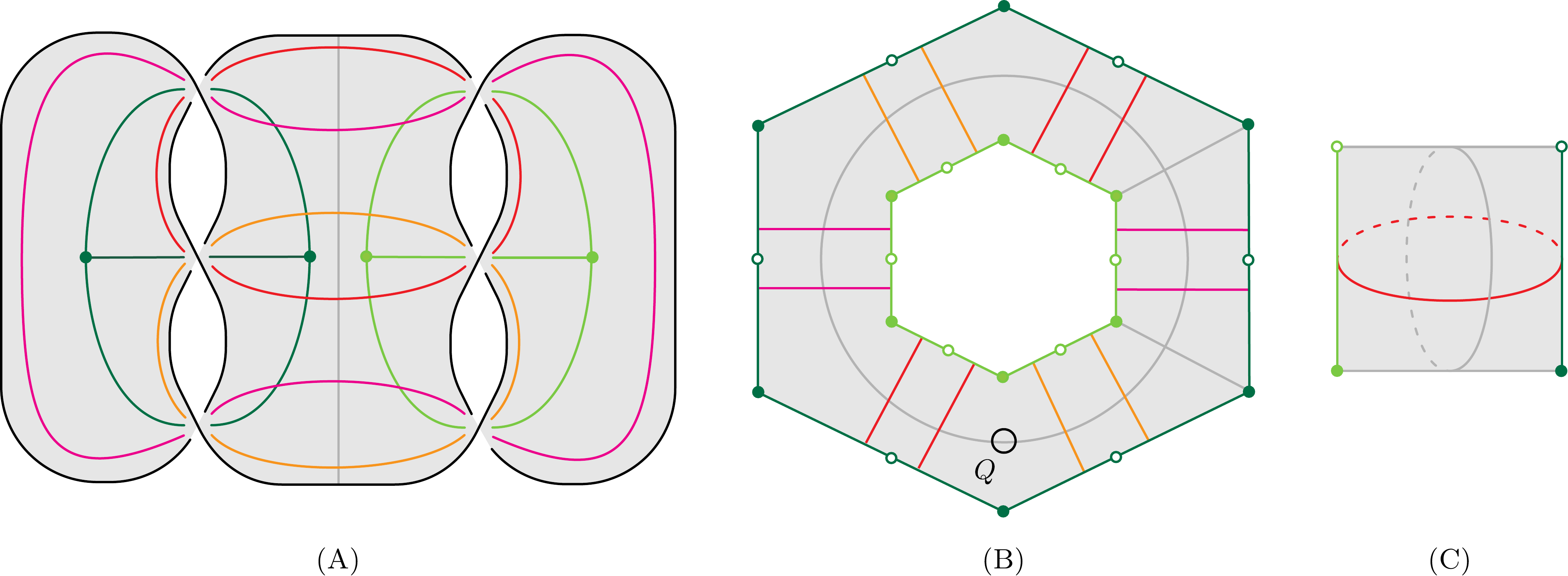}
	\caption{The fiber surface $F$ for the square knot $Q$ is shown in (\textsc A) with two embedded graphs.
	Capping off with a disk and cutting open along these graphs gives annular representation of $\widehat F = F\cup D^2$ with 12--gon boundaries shown in (\textsc B).  (In later figures, we include only the filled vertices, treating the boundary curves as hexagons.)
	Taking the quotient by the $\Z_6$ action gives the pillowcase shown in (\textsc C), where the slopes $\lambda_\infty$ and $\lambda_0$ are drawn.
	The lifts $\Lambda_\infty$ and $\Ll_0$ of $\lambda_\infty$ and $\lambda_0$ by this quotient map are drawn in $\widehat F$ in (\textsc B).
 	In (\textsc A), the link $\Ll_0$ has been isotoped to lie in $F$ in $S^3$, where three pairs of parallel curves are replaced with three curves.  The intersection of $\Lambda_\infty$ with $F$ is a separating arc.}
	\label{fig:curves_0}
\end{figure}

Given $c/d\in\Q$ with $c$ even, we can consider the slope $\lambda_{c/d}$ in $S$.
The lift of $\lambda_{c/d}$ to $\widehat F$ is a $pq$--component multi-curve $\Ll_{c/d}$.  The closed monodromy $\widehat\varphi$ cyclically permutes these curves.
The curves can be isotoped in $\widehat F$ to lie entirely within $F$.
We denote the resulting derivative link in $S^3$ by $\Ll_{c/d}$, noting that it is well-defined only up to slides over $\partial F = Q$.
For more examples of the $\Ll_{c/d}$, see Figures~\ref{fig:curves_2} and~\ref{fig:curves_3}.

\begin{remark}
\label{rmk:scharlemann}
	When $q=2$, the multi-curve $\Ll_{c/d}$ consists of $p$ pairs of parallel curves, as depicted in Figure~\ref{fig:curves_0} (\textsc B) in the the case $p=3$.
	Discarding one curve from each pair, we get a new, $p$--component multi-curve in $\widehat F$ that can be isotoped to be $\Z_{2p}$--equivariant.
	When $q=2$, we work with this $p$--component multi-curve, which continue to denote $\Ll_{c/d}$ (in an abuse of notation).
	This explains why $\Ll_0$ is shown as three curves (in shades of red) in Figure~\ref{fig:curves_0}(\textsc A).
\end{remark}

Finally, we can define the disk $D_{c/d}$:  In~\cite{MeiZup_22}, the authors show that $Q \cup \Ll_{c/d}$ is an R-link.
Let $B_{c/d} = B_{Q \cup \Ll_{c/d}}$ be the corresponding homotopy 4--ball.
We define the disk $D_{c/d}$ to be the core of the 2--handle attached along $Q$ in $B_{c/d}$, noting that $B_{c/d}$ is diffeomorphic to the standard 4--ball by Theorem~\ref{thm:MZ_disks}.

\begin{remark}
\label{rmk:twist_disks}
	Part (3) of Proposition~\ref{prop:null} below shows that, for each $p>q>1$, the disk $D_{2n/1}$ is the $n$--twist ribbon disk for $Q_{p,q}$.
	Thus, the family $\Rr$ of disks for $Q_{p,q}$ expands a classical family of ribbon disks.
	While every disk $D_{c/d}$ is homotopy-ribbon by construction, it is unknown whether $D_{c/d}$ is ribbon in general; cf. Section~\ref{sec:ribbon}.
\end{remark}

\section{The group $\Sym(S^3,Q)$}\label{sec:symmetries}

In this section we analyze the symmetries of a generalized square knot $Q=Q_{p,q}$.
Let $\Sym(S^3,Q)$ denote the symmetry group of $Q$.
This is the group of diffeomorphisms of the pair $(S^3,Q)$, modulo the normal subgroup consisting of those diffeomorphisms that are isotopic to the identity through diffeomorphisms of the pair.
We refer the reader to~\cite[Section~10.6]{Kaw_96} for complete details and a survey of classical results regarding symmetry groups of knots.
Symmetry groups of knots are generally well understood, especially in the case of prime knots.
Presently, we require a precise understanding $\Sym(S^3,Q)$ for the composite knot $Q$, the details of which we have not been able to find in the literature, so we provide them here.
The main goal of this section is to show that $\Sym(S^3,Q)$ is generated by three elements, which we will denote $\alpha$, $\beta$, and $\tau$.

Consider the Heegaard decomposition $S^3 = B_+\cup_S B_-$ of the 3--sphere into two 3--balls so that the 2--sphere $S = \partial B_+ = \partial B_-$ is a decomposing sphere for $Q$.
Write $K_\pm = B_\pm\cap Q$.

Let $\alpha\colon S^3\to S^3$ be the orientation-reversing diffeomorphism given by reflection across the 2--sphere $S$.
We assume that the set-up we are considering is symmetric with respect to $\alpha$ in the sense that $\alpha(B_\pm,K_\pm) = (B_\mp,K_\mp)$.
In particular, we have that $\alpha(Q) = Q$, though $\alpha$ reverses the orientation of $Q$.

Let $\beta\colon S^3\to S^3$ denote the rotation of $S^3$ about an unknotted axis $A$ through $\pi$ radians such that $\beta(B_\pm,K_\pm) = (B_\pm,K_\pm)$.
This is a strong-inversion of $Q$, as well as of the arcs $K_\pm$, so $\beta(Q)=Q$, but $\beta$ reverses the orientation of $Q$.
Figure~\ref{fig:sym} shows the geometric set-up in the case of $Q_{3,2}$.
	
\begin{figure}[h!]
	\centering
	\includegraphics[width=.5\textwidth]{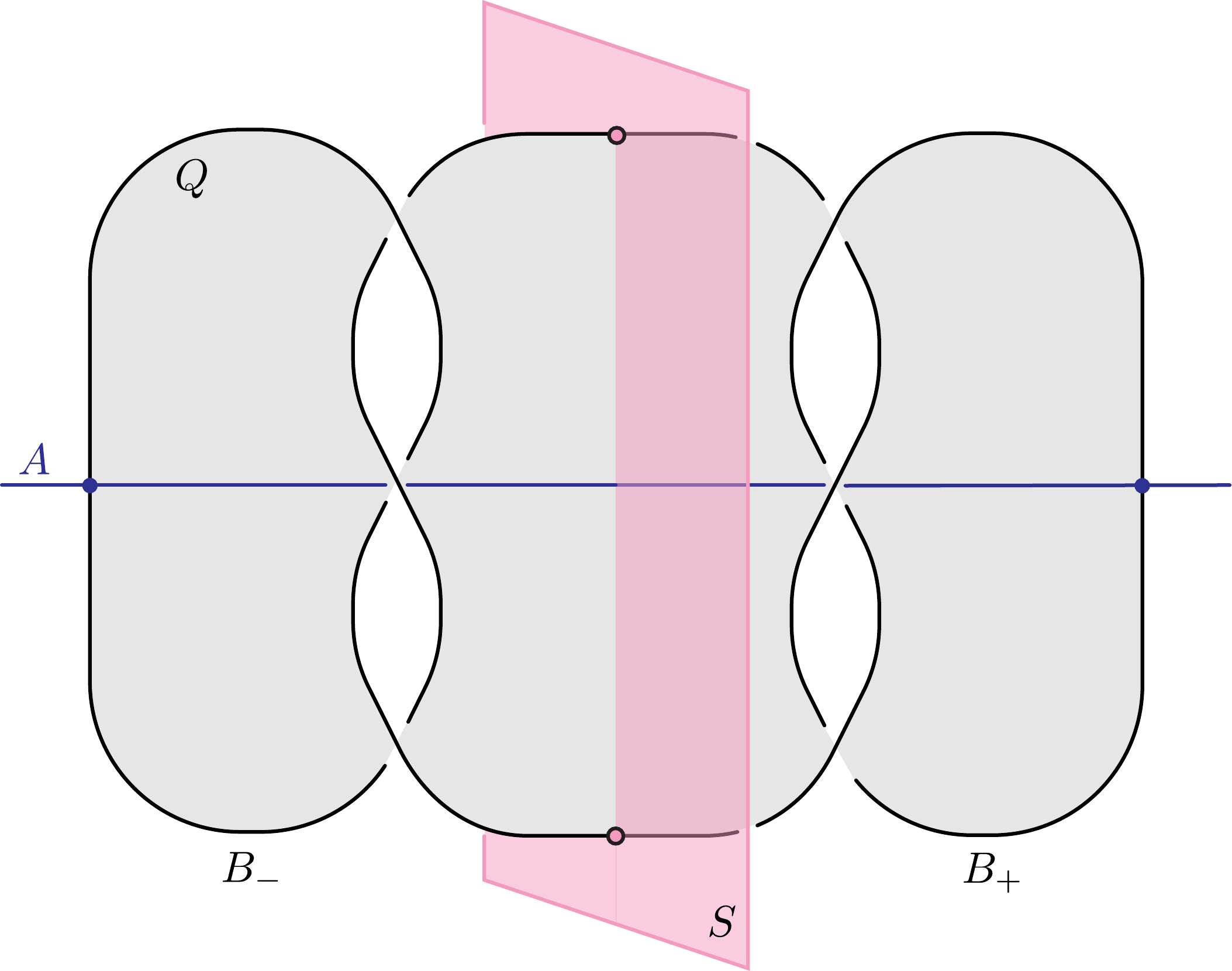}
	\caption{The square knot $Q_{3,2}$ shown with the decomposing sphere $S$ dividing $S^3$ into the balls $B_\pm$ and the strong involution axis $A$.}
	\label{fig:sym}
\end{figure}

\begin{remark}
\label{rmk:involutions}
	Note that $\alpha$ and $\beta$ are involutions in $\Sym(S^3,Q)$ and that they commute.
	The product $\alpha\circ\beta$ is a point reflection about the pair of points $S\cap A$ and is orientation-reversing on $S^3$ but preserves the orientation of $Q$.
\end{remark}

We now define the map $\tau\colon S^3\to S^3$ as a twist on the 2--sphere $S$.
Let $C = S\times[-1,1]$ be a collar neighborhood parametrized with spherical coordinates as
$$C = \left\{(\theta, \phi, t)\,|\,\theta\in[0,2\pi),\phi\in[-\pi/2,\pi/2],t\in[-1,1]\right\}$$
so that $(S \X \{t\}) \cap Q = \{(0,-\pi/2,t),(0,\pi/2,t)\}$, the poles of $S \X \{t\}$, and $S\times\{\pm1\}$ lies in $\Int(B_\pm)$.
For $x\in S^3$, define
$$\tau(x) = \begin{cases}
	x & \text{if $x\in S^3\setminus C$,} \\
	(\theta + 2\pi t+\pi,\phi,t) & \text{if $x\in C$ is given by $x=(\theta,\phi,t)$ with $t\in\left[-\frac{1}{2},\frac{1}{2}\right]$, and} \\
	x & \text{if $x\in C$ is given by $x=(\theta,\phi,t)$ with $t\in\left[-1,-\frac{1}{2}\right]\cup\left[\frac{1}{2},1\right]$.}
\end{cases}$$
Note that $\tau$ fixes the poles of $S \X \{t\}$, so $\tau$ fixes $Q$ point-wise.
As an element of $\Diff(S^3)$, the twist $\tau$ is isotopic to the identity via a family of maps that ``untwist'' $\tau$ by rotating either $B_+$ or $B_-$ along the axis spanned by the poles of $S$ given by $S\cap Q$.

Continuing, we define a pair of maps $\tau_\pm\colon S^3\to S^3$ as twists on 2--spheres parallel to $S$ in $C$ using the same parametrization as above.
For $x\in S^3$, define
$$\tau_+(x) = \begin{cases}
	x & \text{if $x\in S^3\setminus C$,} \\
	(\theta + 2\pi t,\phi,t) & \text{if $x\in C$ is given by $x=(\theta,\phi,t)$ with $t\in[0,1]$, and}\\
	x & \text{if $x\in C$ is given by $x=(\theta,\phi,t)$ with $t\in[-1,0]$.}
\end{cases}$$
The twist $\tau_-$ is defined identically, but with the domains for $t$ swapped in the last two cases.
Note that $\tau_\pm$ fixes $Q$ point-wise.
Briefly, $\tau_\pm$ is the result of isotoping $\tau$ so that the twisting occurs in $B_\pm$, which we now make precise.

\begin{lemma}
\label{lem:tau}
	The twists $\tau_+$ and $\tau_-$ are each isotopic to $\tau$ and are supported in $B_+$ and $B_-$, respectively. 
\end{lemma}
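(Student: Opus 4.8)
The plan is to recognize $\tau$, $\tau_+$, and $\tau_-$ as three instances of a single ``collar twist'' supported in $C=S\times[-1,1]$, differing only in the subinterval of $[-1,1]$ on which the twisting is concentrated, and then to produce the two isotopies by sliding this subinterval along the collar. For a smooth function $g\colon[-1,1]\to\R$ that is locally constant near each endpoint with $g\equiv 0$ near $t=-1$ and $g\equiv 2\pi$ near $t=+1$, define $T_g$ to be the identity on $S^3\setminus C$ and $T_g(\theta,\phi,t)=(\theta+g(t),\phi,t)$ on $C$. Because rotation by a multiple of $2\pi$ is the identity and $g$ is flat near the two ends, $T_g$ matches the identity smoothly across $\partial C$ and defines a diffeomorphism of $S^3$ that is the identity off $C$; and because every rotation in the $\theta$--coordinate fixes the poles $\phi=\pm\pi/2$, the map $T_g$ fixes $Q$ pointwise. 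After smoothing the corners in the given piecewise formulas (which alters each map only by an isotopy supported in $C$ and fixing $Q$), the maps $\tau$, $\tau_+$, and $\tau_-$ are the twists $T_g$ whose ramp from $0$ to $2\pi$ is supported in a subinterval of $(-\tfrac12,\tfrac12)$, of $(0,1)$, and of $(-1,0)$, respectively.

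First I would dispatch the support claim, which is immediate from the formulas: $\tau_+$ is the identity for $t\in[-1,0]$ and on $S^3\setminus C$, hence on all of $B_-$ and on $S=S\times\{0\}$, so $\tau_+$ is supported in $B_+$; the mirror statement shows $\tau_-$ is supported in $B_-$.

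For the isotopy $\tau\simeq\tau_+$ I would slide the ramp along the collar. Let $J\subset(-1,1)$ denote the compact subinterval on which a chosen profile ramps from $0$ to $2\pi$, with the profile flat on either side of $J$; up to isotopy, the twist $T_g$ depends only on $J$. Choosing a smooth family $\{J_s\}_{s\in[0,1]}$ of such subintervals that slides $J_0\subset(-\tfrac12,\tfrac12)$ (realizing $\tau$) to $J_1\subset(0,1)$ (realizing $\tau_+$), with each $J_s$ staying compactly inside $(-1,1)$, produces a corresponding smooth family of profiles $g_s$ and hence a smooth path $s\mapsto T_{g_s}$ in $\Diff(S^3)$ from $\tau$ to $\tau_+$. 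Since every $g_s$ retains the endpoint behavior above and fixes the poles, this path fixes $Q$ pointwise throughout. Sliding the window instead onto $J_1\subset(-1,0)$ yields the analogous isotopy $\tau\simeq\tau_-$.

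The one point requiring genuine care, and the step I expect to be the main obstacle, is verifying that each $T_g$ is smooth along the knot $Q$, where the angular coordinate $\theta$ degenerates at the poles. Near a point of $Q$ the collar looks like a solid cylinder $D^2\times[-1,1]$ with $Q$ as its core, and $T_g$ rotates each disk $D^2\times\{t\}$ rigidly through the angle $g(t)$; this map is smooth precisely because $g$ depends smoothly on $t$, which is exactly why one must replace the literal piecewise formulas with smooth profiles that are flat near the relevant values of $t$ before running the sliding argument. Granting this, every $T_{g_s}$ is a diffeomorphism of the pair $(S^3,Q)$, and the two sliding families complete the proof.
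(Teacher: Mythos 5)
Your proof is correct and is essentially the paper's argument: the paper writes down one explicit one-parameter family $F(x,s)$ of collar twists that slides the twisting window from $[-1,0]$ (giving $\tau_-$) through $[-\tfrac12,\tfrac12]$ (giving $\tau$) to $[0,1]$ (giving $\tau_+$), which is exactly your sliding-the-ramp isotopy, with the support claim likewise read off directly from the formulas. Your additional care with smooth profile functions and smoothness along $Q$ at the poles is a technical refinement the paper elides (its piecewise-linear formulas are only continuous at the breakpoints), not a different method.
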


\begin{proof}
	Consider the isotopy $F\colon S^3\times [-1,1]\to S^3$ defined by
	$$F(x,s) = \begin{cases}
		x & \text{if $x\in S^3\setminus C$,} \\
		(\theta + (2\pi t + (1-s)\pi),\phi,t) & \text{if $x\in C$ is given by $x=(\theta,\phi,t)$}\\
		& \text{with $t\in\left[\frac{s-1}{2}\frac{s+1}{2}\right]$, and}\\
		x & \text{if $x\in C$ is given by $x=(\theta,\phi,t)$} \\
		& \text{with $t\in\left[-1,\frac{s-1}{2}\right]\cup\left[\frac{s+1}{2},1\right]$.}
	\end{cases}$$
	It is clear that $F(x,-1) = \tau_-(x)$, $F(x,0)=\tau(x)$, and $F(x,1)=\tau_+(x)$, as desired.
	The statement about the support of the twists is immediate.
\end{proof}

Observe that $\tau$ preserves both the orientation of $S^3$ and the orientation of $Q$.  We are now ready to prove the main technical result of the section.

\begin{proposition}
\label{prop:sym}
	The group $\Sym(S^3,Q)$ is generated by $\alpha$, $\beta$, and $\tau$.
\end{proposition}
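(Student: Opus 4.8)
The plan is to show that the symmetry group $\Sym(S^3,Q)$ is \emph{generated} by $\alpha$, $\beta$, and $\tau$ by leveraging the prime decomposition $Q = T_{p,q}\#\overline{T}_{p,q}$ together with the fact that torus knots have fully understood symmetry groups. The key structural input is that $Q$ is a composite knot whose two summands $K_\pm = B_\pm\cap Q$ are torus knots that are mirror images of one another. The decomposing sphere $S$ is essentially unique: for a composite knot, the collection of decomposing spheres is well-controlled (any two swallow-follow spheres realizing the same factorization are isotopic as a pair, by the uniqueness of prime decomposition for knots and the structure theory of companion tori). So the first step is to argue that any diffeomorphism $f$ of $(S^3,Q)$ can be isotoped (through diffeomorphisms of the pair) so that $f(S) = S$, i.e.\ so that $f$ respects the decomposing sphere.

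Once $f$ preserves $S$, it either preserves or swaps the two balls $B_\pm$. First I would handle the case where $f$ swaps them: precomposing with $\alpha$ (which swaps $B_\pm$) reduces to the case $f(B_\pm) = B_\pm$. In the preserving case, $f$ restricts to a symmetry of each knotted ball $(B_\pm, K_\pm)$, fixing the two points $S\cap Q$ (possibly after further composing with $\beta$, which also preserves each $B_\pm$ while reversing orientation on $Q$, to normalize how $f$ acts on these two boundary points). The symmetry group of a torus knot $T_{p,q}$ with $p>q>1$ is well known: it is generated by the strong inversion, and torus knots are chiral and invertible but not periodically symmetric in a way that produces extra generators. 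This is where $\alpha$ and $\beta$ do their work — composing $f$ with appropriate powers of $\alpha$ and $\beta$ reduces the restriction of $f$ on each $(B_\pm,K_\pm)$ to a map isotopic (rel the two endpoints of $K_\pm$) to the identity.

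After these reductions, I am left with a diffeomorphism $g$ of the pair $(S^3,Q)$ that preserves $S$, preserves each $B_\pm$, and restricts on each $(B_\pm, K_\pm)$ to a map isotopic to the identity rel endpoints. The claim is then that $g$ is isotopic to some power of $\tau$. The point is that the isotopies on $B_+$ and on $B_-$ need not match up on the collar $C$ of $S$; the discrepancy is exactly a rotational shearing supported near $S$, which is precisely the twist $\tau$. Concretely, using Lemma~\ref{lem:tau}, $\tau$ is isotopic to a twist $\tau_+$ supported in $B_+$ and also to a twist $\tau_-$ supported in $B_-$, so a twist on $S$ can be absorbed into the $B_\pm$ pieces; the mismatch between the two individual untwistings is measured by an integer (a winding number of the identification of collar framings around the axis $S\cap Q$), and that integer is realized by a power of $\tau$. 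This reduces $g$ to a map isotopic to the identity, completing the argument that $\alpha$, $\beta$, $\tau$ generate.

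The hard part will be the first step — establishing that every symmetry can be made to preserve the decomposing sphere $S$ — since this requires carefully invoking the uniqueness of the prime/companionship decomposition and the JSJ-type structure for the pair $(S^3,Q)$, and ruling out exotic symmetries that might permute the two summands in an unexpected way or fail to preserve the swallow-follow torus. The bookkeeping of how $\tau$ interpolates between $\tau_+$ and $\tau_-$ (and hence why the ``mismatch'' contributes exactly a cyclic $\langle\tau\rangle$ worth of generators rather than something larger) is the second delicate point, but Lemma~\ref{lem:tau} already packages most of that analysis, so I expect the decomposing-sphere uniqueness to be the genuine obstacle.
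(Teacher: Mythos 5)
Your outline follows the same skeleton as the paper's proof: normalize orientations by composing with $\alpha$ and $\beta$, use uniqueness of the decomposing sphere to arrange that the symmetry preserves $S$, split along $S$ into the two ball--arc pairs, and identify what remains with powers of $\tau$ via Lemma~\ref{lem:tau}. However, there is a genuine gap at the step you treat as routine. You claim that, after composing with $\alpha$ and $\beta$, the restriction of $f$ to each $(B_\pm,K_\pm)$ becomes isotopic (rel the endpoints of $K_\pm$) to the identity, and you justify this by the fact that $\Sym(S^3,T_{p,q})$ is generated by the strong inversion. That fact concerns symmetries of the \emph{capped-off} knot in $S^3$; it does not control the mapping class group of the ball--arc pair, because an isotopy of $(S^3,T_{p,q})$ killing a symmetry need not preserve the capping ball, and the obstruction to deforming it to do so is measured by nontrivial $\pi_1$'s of embedding spaces. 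Indeed, the naive transfer is false at the rel-boundary level: Litherland~\cite[Corollary~6.4]{Lit_79} shows that $\Sym_\partial(B_\pm,K_\pm)$ is \emph{infinite cyclic}, generated by the boundary twist $\tau_\pm$ --- not trivial. This is precisely the external input the paper invokes, and it is where the generator $\tau$ actually enters: each ball-pair restriction is rel-boundary isotopic to a power of $\tau_\pm$, and Lemma~\ref{lem:tau} then converts these into powers of $\tau$.

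Your weaker claim (trivializing the restrictions while allowing $\partial B_\pm = S$ to move, fixing only the two endpoints) happens to be true, but its proof runs through Litherland's theorem anyway: restriction gives a fibration over $\Diff(S^2, 2\ \text{pts})$, whose fundamental group $\Z$ (loops of rotations about the axis) surjects onto $\Z\langle\tau_\pm\rangle = \Sym_\partial(B_\pm,K_\pm)$, which is why the setwise group is trivial. Your ``mismatch / winding number of collar framings'' discussion is exactly the right intuition for this fibration argument, but as written the proposal is circular: you first assert the ball-pair restrictions can be trivialized (which needs Litherland), and then recover $\tau$ from the failure of the two trivializations to agree on the collar, whereas in the correct accounting $\tau$ appears one step earlier, as the generator of the rel-boundary group. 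Finally, note the reversal of emphasis: you flag uniqueness of the decomposing sphere as the genuine obstacle, but the paper disposes of that with a self-contained innermost-curve argument (Lemma~\ref{lem:decomposing_sphere}), using irreducibility of the knot exterior and the fact that essential annuli in a torus-knot exterior meet the boundary in slope-$pq$ fibers; the load-bearing ingredient your proposal is missing is Litherland's computation of $\Sym_\partial(B_\pm,K_\pm)$.
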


\begin{proof}
	Let $\psi'\in\Sym(S^3,Q)$ be any element, and note that there is exactly one element $\psi\in\{\psi', \psi'\circ\alpha,\psi'\circ\beta,\psi'\circ\alpha\circ\beta\}$ such that $\psi$ is orientation-preserving on $S^3$ and preserves the orientation of $Q$.
	We will show that $\psi$ is isotopic to a power of $\tau$, which will imply that $\psi'$ is product of the proposed generators.

	In Lemma~\ref{lem:decomposing_sphere} (below), we show that $S$ is the unique decomposing sphere (up to isotopy through decomposing spheres) for $Q$.
	In light of this, we can assume (after potentially isotoping $\psi$) that $\psi(S)=S$.
	Moreover, since $\psi$ preserves the orientations of both $S^3$ and $Q$, we have $\psi(S\cap Q) = \psi(S\cap Q)$ point-wise.
	It follows that $\psi$ restricts to an automorphism of the pair $(S,S\cap Q)$.
	The mapping class $\Sym(S,S\cap Q)$ is trivial, so we can isotope $\psi$ in a neighborhood of $S$ so that $\psi$ is the identity in a neighborhood of $S$.

	Let $\psi_\pm$ denote the restriction of $\psi$ to $(B_\pm,K\pm)$, and note that $\psi_\pm$ is the identity near $\partial B_\pm = S$.
	Thus, $\psi_\pm$ is a ball-pair automorphism in the sense of Litherland~\cite[Section~1]{Lit_79}, where it was shown that the mapping class group\footnote{To be precise, Litherland worked modulo pseudo-isotopy but noted that the distinction is irrelevant in the present, 3--dimensional setting. See Remark~(1) in Section~1 and Section~6 of~\cite{Lit_79}.} $\Sym_\partial(B_\pm,K_\pm)$ is infinite cyclic when $K_\pm$ is a (punctured) torus knot~\cite[Corollary~6.4]{Lit_79}.
	The generating element of $\Sym_\partial(B_\pm,K_\pm)$ is precisely $\tau_\pm$, as defined above\footnote{Our twist $\tau$ agrees with the twist $f$ defined by Zeeman~\cite[Section~6]{Zee_65}. The generating element $\tau$ for $\Sym(B_\pm,K_\pm)$ described by Litherland is a meridional twist along the swallow-follow torus and is isotopic to Zeeman's $f$ (and our $\tau$)~\cite[Example~2.1]{Lit_79}.}.
	Since $\psi$ is the identity near $S$, we have that
	$$\psi = (\psi_+\cup\id_{B_-})\circ(\id_{B_+}\cup\psi_-),$$
	so by applying Litherland's result, we have
	$$\psi = (\tau_+^{n_+}\cup\id_{B_-})\circ(\id_{B_+}\cup\tau_-^{n_-})$$
	for some $n_\pm\in\Z$.

	By Lemma~\ref{lem:tau}, $\tau_+$ and $\tau_-$ are isotopic to $\tau$, so $\psi \simeq \tau^{n_++n_-}$, which completes the proof.
\end{proof}

We conclude this section with the proof of uniqueness of decomposing spheres for $Q$ referenced in the proof of Proposition~\ref{prop:sym}.

\begin{lemma}
\label{lem:decomposing_sphere}
	Any decomposing sphere for $Q$ is isotopic through decomposing spheres to $S$.
\end{lemma}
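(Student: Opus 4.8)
The plan is to deduce uniqueness of the decomposing sphere from the uniqueness of the prime factorization of $Q$, for which I would invoke Schubert's theorem (see, e.g., \cite{Kaw_96}). Recall that a decomposing sphere for $Q$ is a $2$--sphere $S'$ meeting $Q$ transversely in two points and separating $(S^3,Q)$ into two ball--arc pairs $(B'_+,K'_+)$ and $(B'_-,K'_-)$, each nontrivial; equivalently, $S'$ exhibits a connected-sum decomposition $Q = K'_+ \# K'_-$ with both summands nontrivial.

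First I would identify the summands. Since $T_{p,q}$ is a nontrivial torus knot with $p>q>1$, it is prime, and since such torus knots are chiral, $T_{p,q}$ is not isotopic to its mirror $\overline{T}_{p,q}$; thus $Q = T_{p,q}\#\overline{T}_{p,q}$ is a connected sum of two \emph{distinct} prime knots. Given an arbitrary decomposing sphere $S'$, the summands $K'_\pm$ are nontrivial, so by uniqueness of prime factorization each $K'_\pm$ is itself prime and the unordered pair $\{K'_+,K'_-\}$ equals $\{T_{p,q},\overline{T}_{p,q}\}$. In particular, every decomposing sphere realizes the same factorization as $S$.

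The heart of the matter is the geometric uniqueness: two decomposing spheres realizing the same factorization are isotopic. Here I would pass to the knot exterior $E(Q) = S^3\setminus \Int N(Q)$, in which $A = S\cap E(Q)$ and $A' = S'\cap E(Q)$ become essential annuli, each with two meridional boundary curves on $\partial E(Q)$. After isotoping these meridional boundary curves to be disjoint on the boundary torus and minimizing $|A\cap A'|$, I would run an innermost-circle argument to remove all interior intersection circles: an innermost circle that is inessential on $A'$ bounds a subdisk which, capped using the irreducibility of $E(Q)$, yields a cut-and-paste reduction, while an essential intersection circle is core-parallel in both annuli and permits an isotopy of $A'$ across $A$ that lowers the intersection number. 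Once $A$ and $A'$ are disjoint, the region they cobound is a product, identifying $A'$ with $A$ up to isotopy and hence $S'$ with $S$. Finally, because each stage of the isotopy may be kept transverse to $Q$, meeting it in exactly two points, the resulting isotopy is through decomposing spheres, as required.

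I expect the geometric uniqueness—specifically, ruling out essential intersection circles and recognizing the cobounding region as a product—to be the main obstacle, since it is here that irreducibility of $E(Q)$ and the primeness of the torus-knot summands must be combined (with distinctness of the two summands pinning down which factor lies on which side of $S'$); by contrast, the identification of the summands via Schubert's theorem is routine.
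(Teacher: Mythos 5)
Your overall strategy is the same as the paper's: intersect the two surfaces, remove intersection circles, and then recognize a product region (the paper works directly with spheres meeting $Q$ in two points rather than with the meridional annuli $S \cap E(Q)$, but that difference is cosmetic). The genuine gap is at the step you yourself identify as the main obstacle and then dispose of in one clause: the claim that ``an essential intersection circle is core-parallel in both annuli and permits an isotopy of $A'$ across $A$ that lowers the intersection number.'' This is not a formal consequence of irreducibility or incompressibility, and it is false for essential annuli in general $3$--manifolds (e.g., two vertical annuli in a Seifert fibered space lying over arcs that cannot be made disjoint in the base intersect in essential circles that no isotopy removes). What makes it true here is specific to the situation: a subannulus of $A'$ cut off by two adjacent essential intersection circles is a properly embedded annulus in one of the two pieces $B_\pm \setminus \nu(K_\pm) \cong E(T_{p,q})$, with two meridional boundary curves. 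Such an annulus is incompressible (a compressing disk would exhibit a meridian bounding a disk in a nontrivial knot exterior), and it cannot be essential, because every essential annulus in the Seifert fibered torus-knot exterior is vertical with boundary slope $pq$, which is not the meridional slope; hence it is boundary-parallel, and pushing across the parallelism region removes two intersection circles. This classification of annuli in $E(T_{p,q})$ --- equivalently, the fact that a prime knot exterior contains no essential meridional annulus --- is the engine of the paper's proof, and without it your induction does not run. The same input is missing from your final step: once $A$ and $A'$ are disjoint they do not automatically cobound a product; one again needs boundary-parallelism of $A'$ in the torus-knot exterior containing it, together with the fact that both complementary arcs of a decomposing sphere are knotted, to rule out $A'$ being parallel into the wrong part of the boundary (which would make one side of $S'$ a trivial ball-arc pair).

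Two smaller points. First, the appeal to Schubert's factorization theorem and to chirality of torus knots does no real work: identifying the summands is never used in the isotopy argument, and distinctness of the two summands is irrelevant (the lemma and the paper's proof apply verbatim to knots of the form $K \# K$, e.g.\ the granny knot). Second, in your innermost-circle step you need the observation that a circle of $A \cap A'$ cannot be inessential on one annulus and essential on the other; this follows from incompressibility of the annuli, but it should be stated, since the dichotomy your argument relies on (cap off inessential circles, handle essential circles) silently assumes it.
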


\begin{proof}
	Let $S$ be the decomposing sphere for $Q$ described above.
	Suppose $S'$ is another decomposing sphere for $Q$.
	After an isotopy of $S'$, we can assume that $\Cc = S\cap S'$ is a collection of simple closed curves.
	We first show that $S'$ can be isotoped to be disjoint from $S$.
	Suppose for a contradiction that $|\Cc|$ is positive and cannot be decreased by isotoping $S'$.
	Consider the set of disks in $S\setminus\Cc$.
	There are two cases: Either some disk is disjoint from $Q$ or no disk is.
	
	First, assume there is a disk $D$ in $S\setminus\Cc$ that is disjoint from $Q$.
	Let $\gamma=\partial D$.
	The curve $\gamma$ bounds two disks in $S'$; let $D'$ be the disk in $S'$ with $\partial D'=\gamma$ and satisfying $D\cap Q=\varnothing$.
	Let $P = D\cup_\gamma D'$.
	Since $P$ is an embedded sphere in exterior $S^3\setminus\nu(Q)$, which is an irreducible manifold, there is an embedded ball $B$ in $S^3\setminus\nu(Q)$ with $\partial B=P$.
	Using $B$, we can isotope $S'$, removing $\gamma$ from $\Cc$ (along with, potentially, other curves of intersection).
	This contradicts our assumption about the minimality of $\Cc$.
	Therefore, each disk of $S\setminus\Cc$ intersects $Q$; it follows that the curves of $\Cc$ are parallel and separate the two points of $S\cap Q$. 
		
	We claim $|\Cc|\leq 1$.
	To see this, note that the fact that the curves of $\Cc$ are parallel in $S$ and separate the points of $S\cap Q$ implies the same is true of the curves when viewed in $S'$.
	Let $\gamma_1$ and $\gamma_2$ be two curves of $\Cc$ that are adjacent in $S'$.
	Let $A$ be the annulus they co-bound.
	Suppose without loss of generality that $A\subset B_+\setminus\nu(K_+)$.
	This ambient space is homeomorphic to $S^3\setminus\nu(T_{p,q})$, and $\partial A$ is a pair of meridians on the boundary.
	It follows that $A$ is boundary parallel, since any essential annulus in this space intersects the boundary in two circle-fibers of the Seifert fibration, which have slope $pq$.
	Since $A$ is boundary parallel, it co-bounds a solid torus with $S$, and we can isotopy $S'$ across this solid torus to remove the curves $\gamma_1$ and $\gamma_2$ from $\Cc$.
	Thus, we assume $\Cc$ is a single curve $\gamma$.
	
	Let $S = D_1\cup_\gamma D_2$ and $S' = D_1'\cup_\gamma D_2'$, with each disk intersecting $Q$ in a single point.
	Without loss of generality, assume $D_1'\subset B^+$.
	Let $P_1 = D_1\cup_\gamma D_1'$, and let $B_1$ be the ball bounded by $P_1$ in $B_+$.
	If $B_1\cap Q$ is a trivial arc, then $D_1$ and $D_1'$ are parallel through disks in $B_1$ intersecting $Q$ in a single point, and we can isotope $S'$ across $B_1$ to make it disjoint from $S$.
	Otherwise, $B_1$ intersects $Q$ in a knotted arc.
	Let $P_2 = D_2\cup_\gamma D_1'$, and let $B_2$ be the ball bounded by $P_2$ in $B_+$.
	It cannot be the case that the arc $B_2\cap Q$ is knotted, because this would give a decomposition of $T_{p,q}$ into two non-trivial knots.
	So, $B_2\cap Q$ is unknotted, and we can proceed as above to isotope $S'$ to be disjoint from $S$.
	
	Now that $S$ and $S'$ are disjoint, we note that they bound disjoint balls $B = B_+$ and $B'\subset B_-$ in $S^3$.
	We must have that $B'\cap Q$ is a knotted arc, so it follows that $S^3\setminus(B\cup B')$ is a copy of $S^2\times I$ that intersects $Q$ in vertical arcs, and thus $S'$ is isotopic to $S$, as desired.
	
\end{proof}

\section{The proof of Theorem~\ref{thm:class}}\label{sec:proof}

In this section, we leverage the structure of $\Sym(S^3,Q)$ established in the previous section to prove our main result.
First, we analyze how extensions of the generators of $\Sym(S^3,Q)$ act on the members of $\Rr$.

\begin{proposition}
\label{prop:null}
	The generators $\alpha$, $\beta$, and $\tau$ of $\Sym(S^3,Q)$ admit extensions $\overline\alpha$, $\overline\beta$, and $\overline\tau$ to $B^4$ satisfying the following:
	\begin{enumerate}
		\item $\overline\alpha(D_{c/d}) = D_{-c/d}$
		\item $\overline\beta(D_{c/d}) = D_{-c/d}$
		\item $\overline\tau(D_{c/d}) = D_{(c-2d)/d}$
\end{enumerate}
\end{proposition}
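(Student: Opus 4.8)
The plan is to realize each generator as a symmetry of the fibered structure underlying the construction of $\Rr$ and then exploit that the disks $D_{c/d}$ are built canonically from slopes on the pillowcase. Recall that $D_{c/d}$ is the core of the $2$--handle of the homotopy ball $B_{c/d}$ associated to the R-link $Q\cup\Ll_{c/d}$, where $\Ll_{c/d}\subset\widehat F$ is the lift of the slope $\lambda_{c/d}$ on the quotient pillowcase $S=\widehat F/\langle\widehat\varphi\rangle$. The governing principle is: if $\psi\in\Sym(S^3,Q)$ can be isotoped to carry $\Ll_{c/d}$ to $\Ll_{c'/d'}$ (up to slides over $Q$), then $\psi$ carries the R-link $Q\cup\Ll_{c/d}$ to $Q\cup\Ll_{c'/d'}$ and hence induces a diffeomorphism of pairs $(B_{c/d},D_{c/d})\to(B_{c'/d'},D_{c'/d'})$ taking core to core. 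Since $B_{c/d}\cong B^4\cong B_{c'/d'}$ by Theorem~\ref{thm:MZ_disks}(2), composing with the identifications of $B_{c/d}$ and $B_{c'/d'}$ with $B^4$ produces a self-diffeomorphism $\overline\psi$ of $B^4$ that restricts to $\psi$ on $S^3$ and carries $D_{c/d}$ to $D_{c'/d'}$. Thus the whole proposition reduces to computing how $\alpha$, $\beta$, and $\tau$ act on the slopes $\lambda_{c/d}$.

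To make sense of this action, I would first argue that each generator preserves the fibration of $Q$. Since $Q$ is fibered, its fibration is unique up to isotopy, so each $\psi\in\{\alpha,\beta,\tau\}$ may be isotoped to carry the fiber surface $F$ to itself; it then acts on the capped fiber $\widehat F$ and, up to isotopy, either commutes with the closed monodromy $\widehat\varphi$ or conjugates it to $\widehat\varphi^{-1}$, according to whether $\psi$ preserves or reverses the orientation of $Q$. In either case $\psi$ normalizes the $\Z_{pq}$--action and descends to a self-map $\overline\psi_S$ of the pillowcase $S$. Because $\Ll_{c/d}$ is the lift of $\lambda_{c/d}$, we then have $\psi(\Ll_{c/d})=\Ll_{c'/d'}$ with $\lambda_{c'/d'}=\overline\psi_S(\lambda_{c/d})$, up to the indeterminacy coming from slides over $Q=\partial F$. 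It remains only to identify $\overline\psi_S(\lambda_{c/d})$ in each case.

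For the slope computations I would work in the explicit flat pillowcase model in which $\lambda_{c/d}$ is a line of slope $c/d$. The reflection $\alpha$ across the decomposing sphere and the strong inversion $\beta$ both reverse the orientation of $Q$; each therefore descends to an orientation-reversing map of $S$ that fixes the distinguished slopes $\lambda_0$ and $\lambda_\infty$ and sends a line of slope $c/d$ to one of slope $-c/d$, giving $\overline\alpha(D_{c/d})=D_{-c/d}$ and $\overline\beta(D_{c/d})=D_{-c/d}$. The twist $\tau$ preserves both orientations and descends to a shear of $S$; reading off the explicit formula $(\theta,\phi,t)\mapsto(\theta+2\pi t+\pi,\phi,t)$ for $\tau$ on the collar $C$ and tracking its effect through the branched cover $\widehat F\to S$, one finds that $\tau$ adds $-2$ to the slope, so that $\overline\tau_S(\lambda_{c/d})=\lambda_{(c-2d)/d}$ and hence $\overline\tau(D_{c/d})=D_{(c-2d)/d}$. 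As a consistency check, part (3) specializes at $d=1$ to $\overline\tau^{\,n}(D_0)=D_{-2n/1}$, which matches the identification of $D_{2n/1}$ with the $n$--twist ribbon disk recorded in Remark~\ref{rmk:twist_disks}.

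The main obstacle is the precise slope bookkeeping for $\tau$: establishing that a single application shears the slope by exactly $-2d$ in the numerator, with the correct sign, rather than by $\pm1$ or $\pm d$. This requires carefully relating the geometric twist along the decomposing sphere of Section~\ref{sec:symmetries} to the shear on the quotient pillowcase of Section~\ref{sec:background}, and in particular pinning down the factor of $2$, which reflects the constraint that $c$ is even in the parametrization of $\Rr$ and should emerge from the interaction of the twist region $t\in[-\tfrac12,\tfrac12]$ with the $pq$--fold branched cover $\widehat F\to S$. A secondary technical point is verifying that $\tau$, which is isotopic to the identity in $\Diff(S^3)$ but nontrivial in $\Sym(S^3,Q)$, genuinely preserves the fiber up to isotopy so that the descent to $S$ is valid.
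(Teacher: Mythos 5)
Your overall strategy matches the paper's: show that each generator carries the framed link $Q\cup\Ll_{c/d}$ to $Q\cup\Ll_{c'/d'}$, extend the diffeomorphism across the handle decomposition of $B^4$, and compute the induced action on slopes by descending to the pillowcase. (Your descent via uniqueness of fibrations and normalization of the $\Z_{pq}$--action is a cosmetic variant of the paper's descent via the Seifert fibered structure on $Y_Q$.) However, two of the steps you compress are genuine gaps, not routine details. First, the ``hence'' in your governing principle hides a necessary ingredient: a diffeomorphism of $S^3$ carrying one framed link to the other extends over $S^3\times I$ together with the $2$--handles, but $B_{c/d}$ also contains $3$--handles and a $4$--handle, and the resulting boundary diffeomorphism of $\#^n(S^1\times S^2)$ must be extended across that cap. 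This is exactly the theorem of Laudenbach--Poenaru~\cite{LauPoe_72}, which the paper invokes explicitly; without it the ``diffeomorphism of pairs taking core to core'' does not yet exist. Relatedly, you never verify that the framings on $\Ll_{c/d}$ are preserved; the paper does this by noting they are surface framings and $\widehat\psi(\widehat F)=\widehat F$.

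Second, and more seriously, the heart of part (3) is the computation that $\tau$ shifts the slope by exactly $-2$, and you explicitly defer it, calling it ``the main obstacle.'' In the paper this step is precisely Lemma~6.3 of~\cite{MeiZup_22} (where the extension of $\tau$ to $Y_Q$ is the map $\Tt_\infty$), cited rather than reproved; since you neither perform the computation nor cite a source for it, your treatment of part (3) is a reduction, not a proof. A smaller instance of the same issue affects parts (1) and (2): you assert without argument that the descended maps of $\alpha$ and $\beta$ fix both $\lambda_0$ and $\lambda_\infty$. That assertion, together with orientation-reversal, does force slope negation, but it is doing real work: an orientation-reversing map fixing only $\lambda_\infty$ could a priori be a reflection composed with a shear, which does not send $\lambda_{c/d}$ to $\lambda_{-c/d}$. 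The paper closes this by identifying $\alpha^*$ as the reflection across $\lambda_\infty$ (citing Lemma~4.4 of~\cite{MeiZup_22}) and by pinning down $\beta^*$ through a fixed-point argument: the fixed set of $\beta^*$ is the quotient $A^*$ of the inversion axis, a circle through all four cone points, so $\beta^*$ is the reflection across $A^*$. Supplying these three ingredients would turn your outline into a complete proof along the paper's lines.
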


\begin{proof}
	Recall that we have a handle-decomposition of $B^4$ relative to its boundary given by attaching 0--framed 2--handles along the link $Q\cup\Ll_{c/d}$ and capping off with 3--handles and a 4--handle.
	Let $\psi\in\{\alpha,\beta\}$.
	We claim that $\psi(Q\cup\Ll_{c/d}) = Q\cup\Ll_{-c/d}$ as framed links.
	It follows that $\psi$ can be extended to $\overline\psi$ by first extending it across the 2--handles, then across the union of the 3--handles and the 4--handle, with this last extension being guaranteed by~\cite{LauPoe_72}.
	
	To verify the claim that $\psi$ preserves the framed link $Q\cup\Ll_{c/d}$, 
	first note that $\psi$ restricts to an automorphism of $S^3\setminus\nu(Q)$ taking meridians to meridians and longitudes to longitudes.
	It follows that $\psi$ preserves the framing on $Q$ and extends to an automorphism $\widehat\psi\colon Y_Q\to Y_Q$.
	Viewing $Y_Q$ as $(S^3\setminus\nu(K_-))\cup(S^3\setminus\nu(K_+)$ (cf.~\cite[Section~6.2]{MeiZup_22}), we see $\widehat\psi$ respects the Seifert fibered structure of $Y_Q$.
	Thus, it descends to an automorphism $\psi^*$ of the pillowcase $S$.
	
	The map $\alpha^*$ is reflection of the pillowcase across $\lambda_\infty$, which sends $\lambda_{c/d}$ to $\lambda_{-c/d}$, as desired.
	(The map $\alpha^*$ was called $\varrho$ in~\cite[Lemma~4.4]{MeiZup_22}.)
	The map $\widehat\beta$ is orientation-reversing as a map from circle-fibers to circle-fibers, so the induced automorphism $\beta^*$ of the pillowcase must be orientation-reversing. 
	Any circle-fiber that intersects the axis $A$ (the fixed-point set of $\beta$) is  fixed set-wise by by $\beta$.
	The quotient $A^*$ of the axis is the fixed-point set of $\beta^*$, and it is a circle containing each cone point and intersecting the curve $\lambda_\infty$ in two points.
	Combining these facts yields that $\beta^*$ is a reflection of the pillowcase across $A^*$.
	It follows that $\beta^*(\lambda_{c/d})=\lambda_{-c/d}$, so $\widehat\beta(\Ll_{c/d}) = \Ll_{-c/d}$.
	In either case, $\psi^*(\lambda_{c/d}) = \lambda_{-c/d}$, so we have that $\psi(\Ll_{c/d}) = \Ll_{-c/d}$.
	Finally, note that $\widehat\psi(\widehat F) = \widehat F$, so $\widehat\psi$ preserves the framings on $\Ll_{c/d}\subset\widehat F$, since these framings are the surface framings.
	
	The claimed effect of $\tau$ on $\Ll_{c/d}$ was established in~\cite[Lemma~6.3]{MeiZup_22}, it was shown that the extension $\tau^*$ of $\tau$ to $Y_Q$ has the effect of
	$$\tau(\Ll_{c/d}) = \Ll_{(c-2d)/d}.$$
	(The extension $\tau^*$ was called $\Tt_\infty$ there.)
	It follows that $\tau(Q\cup\Ll_{c/d}) = Q\cup\Ll_{(c-2d)/d}$ as framed links, so we can build the extension $\overline\tau$ of $\tau$ to $B^4$ by extending it across the handle-decomposition as above.
\end{proof}

We are now ready to prove Theorem~\ref{thm:class}.

\begin{proof}[Proof of Theorem~\ref{thm:class}]
	Suppose that $D_{c/d}$ and $D_{c'/d'}$ are isotopic.
	Then by Proposition~\ref{prop:equiv}, there is a diffeomorphism $\Psi\colon B^4\to B^4$ such that $\Psi(D_{c'/d'}) = D_{c/d}$.
	Let $\psi = \Psi\vert_{\partial B^4}$.
	By Proposition~\ref{prop:sym}, we have that $\psi$ is a product of powers of $\alpha$, $\beta$, and $\tau$.
	Let $\overline\psi$ be extension of $\psi$ to $B^4$ obtained as the corresponding product of powers of the extensions $\overline\alpha$, $\overline\beta$, and $\overline\tau$.
	It follows from Proposition~\ref{prop:null} that $\overline\psi^{-1}(D_{c/d}) = D_{\pm(c-2nd)/d}$ for some $n\in\Z$.
	
	Let $\Psi' = \overline\psi^{-1}\circ\Psi$, and note that $\Psi'$ restricts to the identity on $\partial B^4$.
	Also, we have
	$$\Psi'(D_{c'/d'}) = \overline\psi^{-1}\circ\Psi(D_{c'/d'}) = \overline\psi^{-1}(D_{c/d}) = D_{\pm(c-2nd)/d}.$$
	But $\Psi'$ is a diffeomorphism rel-boundary, so by assertion (2) of Theorem~\ref{thm:MZ_disks}, we must have
	$$\frac{c'}{d'} = \pm\frac{c-2nd}{d},$$
	for some $n\in\Z$, as desired.  The reverse implication follows immediately from Proposition~\ref{prop:null}.
\end{proof}

\section{Finding ribbon disks among $D_{c/d}$}
\label{sec:ribbon}

In this section, we show that if $c=2$ and $q=2$, then any disk of the form $D_{2/2m+1}$ is ribbon.  We also show that the disk $D_{4/5}$ bounded by $Q_{3/2}$ is ribbon.  In Section~\ref{sec:intro}, we referenced another proof of this fact using results in~\cite{GST} and~\cite{Sch_16}; see Remark~\ref{rmk:alt}.
Neither proof yields an explicit construction, and the challenge of finding a ribbon presentation for this disk remains an interesting open problem.
Finally, we give ribbon presentations for the disks $D_{2/(2m+1)}$ bounded by the square knot $Q_{3,2}$.

\subsection{The proof of Theorem~\ref{thm:ribbon}}
\label{subsec:GPR}
\

Fix $p>q>1$ and $Q = Q_{p,q}$.  Recall that an $n$--component link $L$ is an R-link if Dehn surgery on $L$ yields $\#^n(S^1 \X S^2)$.  Two R-links $L$ and $L'$ are \emph{stably equivalent} if there exist unlinks $U$ and $U'$ such that $L \sqcup U$ is handleslide-equivalent to $L' \sqcup U'$, where $\sqcup$ denotes the split union of links.  In the proof of Lemma~5.1 of~\cite{MeiZup_22}, the authors showed

\begin{lemma}\label{lem:cd_equiv}
Fix $c/d \in \Q$ such that $c$ is even, and let $L$ be any nonempty sublink of $\Ll_{c/d}$.  Then $Q \cup L$ is an R-link and is stably equivalent to $Q \cup \Ll_{c/d}$ via handleslides that preserve~$Q$.
\end{lemma}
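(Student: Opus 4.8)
The plan is to establish the handleslide (stable-equivalence) statement first and then deduce the R-link property as a formal consequence. The geometric engine is the mapping-torus structure of $Y_Q = S^3_0(Q)$: it fibers over $S^1$ with fiber $\widehat{F}$ and monodromy $\widehat\varphi$, and the components of $\Ll_{c/d} = \{\gamma_0,\dots,\gamma_{m-1}\}$ (with $m = pq$, or $m=p$ in the $q=2$ case of Remark~\ref{rmk:scharlemann}) are carried to one another by $\widehat\varphi$, say $\gamma_{i+1} = \widehat\varphi(\gamma_i)$. In a mapping torus, any curve on a fiber is isotopic to its monodromy image by sweeping once around the $S^1$--direction, so all of the $\gamma_i$ are pairwise framed-isotopic to $\gamma_0$ in $Y_Q$, where the relevant framing is the surface framing each $\gamma_i$ inherits from $\widehat{F}$ and which $\widehat\varphi$ preserves.

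First I would convert this isotopy in $Y_Q$ into handle moves in the surgery diagram for $Q\cup\Ll_{c/d}$. Sweeping $\gamma_i$ once around the $S^1$--factor of $Y_Q$ passes it through the surgery solid torus glued along $Q$, and by the standard correspondence between isotopies in a surgered manifold and handleslides over the surgery components, each such pass is realized by a handleslide of $\gamma_i$ over the $0$--framed component $Q$ together with an isotopy in the complement of $Q$. Applying this for each index, I can arrange, by handleslides that preserve $Q$, that the components of $\Ll_{c/d}$ become $m$ parallel copies of the single curve $\gamma_0$, all carrying the same surface framing.

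Next I would merge the parallel copies. Two adjacent parallel copies cobound an embedded annulus inducing the common surface framing, so handlesliding one copy over the other along this annulus yields a $0$--framed unknot that bounds a disk split from the remaining components. Performing $m-\ell$ of these merges, where $\ell = |L|\ge 1$ is the number of components of the chosen sublink, reduces $Q\cup\Ll_{c/d}$, via handleslides preserving $Q$, to $Q$ together with $\ell$ parallel copies of $\gamma_0$ and $m-\ell$ split $0$--framed unknots. The same sequence of moves applied only to the components of $L$ shows that $Q\cup L$ is itself handleslide-equivalent to the union of $Q$ with $\ell$ parallel copies of $\gamma_0$; hence $Q\cup L$, after adding $m-\ell$ split unknots, is handleslide-equivalent to $Q\cup\Ll_{c/d}$, which is the asserted stable equivalence. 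The hypothesis that $L$ is nonempty is exactly what guarantees a copy of $\gamma_0$ survives to anchor the merges.

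Finally, the R-link property of $Q\cup L$ follows formally: handleslides preserve the surgered $3$--manifold, and each split $0$--framed unknot contributes an $S^1\times S^2$ summand, so surgery on $Q\cup L$ differs from surgery on $Q\cup\Ll_{c/d}$, known to yield $\#^{1+m}(S^1\times S^2)$, only in the number of such summands; comparing first Betti numbers (equivalently, component counts) forces surgery on $Q\cup L$ to be $\#^{1+\ell}(S^1\times S^2)$, so $Q\cup L$ is an R-link. I expect the main obstacle to be the realization step: making precise that the monodromy isotopy of $\gamma_i$ in $Y_Q$ corresponds to a controlled handleslide of $\gamma_i$ over $Q$ while tracking the surface framing, since the subsequent parallelizing, merging, and Betti-number bookkeeping are then routine Kirby calculus.
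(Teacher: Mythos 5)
This lemma is not proved in the present paper at all---it is quoted from the proof of Lemma~5.1 of~\cite{MeiZup_22}---so the only comparison available is with that source, and your argument is essentially the standard one used there: the closed monodromy of the fibration of $S^3_0(Q)$ cyclically permutes the components of $\Ll_{c/d}$, fiber-sweeps in the surgered manifold are realized by handleslides over the $0$--framed $Q$, surface-framed parallel copies merge by slides into split $0$--framed unknots, and uniqueness of prime decomposition gives the R-link bookkeeping. Your proposal is correct (the only loose phrase is ``comparing first Betti numbers,'' which is really an appeal to Milnor's prime decomposition theorem, as you in effect use) and takes essentially the same approach as the cited proof.
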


The Stable Generalized Property R Conjecture (Stable GPRC; see discussions in~\cite{GST} and~\cite{MSZ}) asserts that every R-link $L$ is stably equivalent to an unlink.  In this case, we say that $L$ is \emph{stably handleslide trivial}, and all 2-- and 3--handles in $B_L$ can be canceled after adding some additional canceling 2-- and 3--handle pairs, so $B_L$ is diffeomorphic to $B^4$.  It is unknown in general whether the R-links in Lemma~\ref{lem:cd_equiv} satisfy that Stable GPRC, but the following result, which is well-known to experts and appears in work of Abe and Tange, can be used to show that certain disks $D_{c/d}$ are ribbon.

\begin{proposition}{\cite[Lemma~5.1]{AbeTan_16}}
\label{prop:AT}
	Suppose $L$ is a stably handleslide trivial R-link.
	Then the core of each 2--handle attached along $L$ is a ribbon disk in $B^4$.
\end{proposition}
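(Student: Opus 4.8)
The plan is to track the cores of the $2$--handles through a sequence of Kirby moves that trivializes $L$, the whole argument resting on one elementary fact: a band sum of ribbon disks performed along a band lying in a collar of $\partial B^4$ is again ribbon. Write $L = K_1\cup\cdots\cup K_n$. By hypothesis there are split unlinks $U$ and $V$ for which $L\sqcup U$ is handleslide-equivalent to the unlink $V$. For each attaching link appearing along such a sequence, the core of the $2$--handle on a component $K$ may be regarded, after absorbing the product collar $S^3\times I$, as a neatly embedded disk in the corresponding homotopy $4$--ball bounding $K\subset S^3$. First I would verify that the property ``every $2$--handle core is ribbon'' is invariant under each elementary move (handleslide or stabilization), read in either direction. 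Since this property holds for the unlink $V$, and $V$ is joined to $L\sqcup U$ by such moves, it will hold for $L\sqcup U$; a short bookkeeping step then transfers the conclusion from $B_{L\sqcup U}$ to $B_L$.

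For the base case, when the attaching link is the $0$--framed unlink $V$ each core is the standard flat disk capping an unknot, which is visibly ribbon. Next I would analyze the two moves. A stabilization adjoins a split $0$--framed unknot carrying its own $2$--handle (canceled by a $3$--handle); this contributes one trivial ribbon core and leaves the others untouched. A handleslide of the handle on $K_j$ over the handle on $K_i$ fixes every core except the one on $K_j$, which it replaces by the band sum $D_j \natural D_i'$, where $D_i'$ is a parallel pushoff of the core $D_i$ and the band lies in the attaching region, which a collar identifies with a neighborhood of $\partial B^4$. Because $D_i'$ is isotopic to the ribbon disk $D_i$ and the band contributes a single saddle (and no maximum) to the radial Morse function, the band sum $D_j \natural D_i'$ is ribbon whenever $D_i$ and $D_j$ are. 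As handleslides are inverted by handleslides, this gives invariance of the property in either direction.

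Granting these steps, the induction runs immediately: propagating the invariant from $V$ back to $L\sqcup U$ shows that every core of the terminal split decomposition $L\sqcup U$ is ribbon, and in particular so is the core on each $K_i$ with $1\le i\le n$. Finally, $B_{L\sqcup U}$ is obtained from $B_L$ by adjoining the split canceling $2$--/$3$--handle pairs arising from $U$; these are disjoint from the cores on $K_1,\dots,K_n$ and do not alter the ambient $B^4\cong B_L$, so each such core is a ribbon disk in $B_L\cong B^4$, which is the assertion.

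The hard part will be making the handleslide step precise, namely that a slide can be realized by a self-diffeomorphism of the $4$--ball that fixes the unslid cores and sends $D_j$ to $D_j \natural D_i'$ by a band confined to a collar of $\partial B^4$, so that no new maxima of the radial function are introduced; pinning this down, together with the supporting lemma that a collar band sum of ribbon disks is ribbon, is where the genuine content lies. By comparison, the base case, the stabilization step, and the final step discarding the $U$--handles are routine.
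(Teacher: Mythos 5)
First, a point of comparison: the paper does not actually prove Proposition~\ref{prop:AT}; it imports it from Abe--Tange~\cite{AbeTan_16} (Lemma~5.1 there, stated ``upside-down''), so there is no in-paper argument to measure you against. Your outline is essentially the standard argument underlying that citation: identify the cores of the $2$--handles on the $0$--framed unlink with flat, visibly ribbon disks; show that a stabilization adjoins a split trivial disk; and show that a handleslide of the handle on $K_j$ over the handle on $K_i$ can be realized by a diffeomorphism of the ambient $4$--ball, rel boundary, that fixes the other cores and replaces the core $D_j$ by $D_j\natural_b D_i'$, where $D_i'$ is the framed parallel pushoff of $D_i$ and $b$ lies in a boundary collar. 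You correctly flag this last realization as the crux; it is standard handle theory (note that extending the resulting diffeomorphism of the $2$--handlebody across the $3$-- and $4$--handles uses Laudenbach--Po\'enaru~\cite{LauPoe_72}, which you should invoke).

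The genuine gap is in your inductive invariant. You propagate the property ``every $2$--handle core is ribbon'' and, at the slide step, assert that $D_j\natural_b D_i'$ is ribbon \emph{whenever $D_i$ and $D_j$ are}. That implication is not justified for two arbitrary disjoint disks: ribbonness of $D_j$ and of $D_i'$ separately gives two separate ambient isotopies, each of which kills the maxima of one disk while dragging the other into an uncontrolled position, and nothing guarantees a \emph{single} isotopy placing the disjoint union $D_j\sqcup D_i'$ --- hence the band sum --- in a maxima-free position. Whether disjoint, individually ribbon disks are always \emph{simultaneously} ribbon is exactly the kind of slice-versus-ribbon question one cannot wave at, so the step as written fails. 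The repair is to strengthen the invariant to a system-level statement: at each stage of the sequence from the unlink back to $L\sqcup U$, there is an identification of the ambient homotopy $4$--ball with $B^4$ carrying the \emph{entire disjoint union} of cores to a configuration on which the radial function has no interior local maxima (equivalently, the cores admit a simultaneous ribbon presentation). The base case (disjoint flat disks) satisfies this, a split stabilization visibly preserves it, and the slide step preserves it because the pushoff $D_i'$ can be taken $C^\infty$--small (so the whole configuration remains maxima-free) and the collar band contributes a single saddle. With that strengthening your induction closes, and the final transfer from $B_{L\sqcup U}$ to $B_L$ by deleting the split canceling $2$--/$3$--handle pairs is fine as you describe.
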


Note that our formulation of the proposition is ``upside-down'' relative to the original formulation, and we have rephrased the statement using terms relevant here.
We will also invoke

\begin{proposition}{\cite[Proposition~3.2]{GST}}\label{prop:2R}
Suppose $L$ is a 2-component R-link such one component of $L$ is an unknot.  Then $L$ is stably handeslide trivial.
\end{proposition}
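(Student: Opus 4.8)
The plan is to strip the problem down to a single knot in $S^1 \X S^2$ and then apply Gabai's resolution of Property~R. Write $L = U \cup K$, where $U$ is the unknotted component. First I would normalize the surgery data: since surgery on $L$ yields $\#^2(S^1\X S^2)$, whose first homology is free of rank~$2$, the linking matrix of $L$ with respect to the surgery framings must vanish. Hence both framings are $0$ and $\lk(U,K) = 0$, consistent with the $2$--handle picture of $B_L$. Because $U$ is a $0$--framed unknot, $0$--surgery on $U$ converts $S^3$ into $M = S^1 \X S^2$, and $K$ descends to a knot $\wh K \subset M$. The condition $\lk(U,K) = 0$ makes $\wh K$ null-homologous in $M$; the surgery slope that $\wh K$ inherits from $L$ is then forced to be the Seifert framing (this is dictated by the homology of $\#^2(S^1\X S^2)$), and surgery along it yields the reducible manifold $M \# (S^1\X S^2) = \#^2(S^1\X S^2)$.

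The crux — and the step I expect to be the main obstacle — is to show that $\wh K$ can be isotoped in $M$ into a $3$--ball $B\subset M$ inside which it is unknotted. The relevant input is that the surgered manifold is reducible, with an extra $S^1\X S^2$ summand beyond the one coming from $M$. Following the standard scheme, I would put a reducing sphere into good position with respect to the surgery solid torus and run innermost-disk and innermost-sphere exchanges to push $\wh K$ off a nonseparating sphere and into a ball; inside that ball Gabai's theorem (only the unknot has Property~R, so a knot in $S^3$ admitting an $S^1\X S^2$ surgery is trivial) forces $\wh K$ to be unknotted. This is precisely the $3$--manifold content underlying \cite[Proposition~3.2]{GST}, and it is where the genuine difficulty lies; the homological and handle-calculus bookkeeping around it is routine by comparison.

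Finally, I would translate the conclusion back into Kirby calculus using the standard dictionary: an ambient isotopy of $\wh K$ in the surgered manifold $M$ is realized on the original diagram by a sequence of handle slides of $K$ over $U$. Thus the isotopy carrying $\wh K$ into a ball $B$ disjoint from the surgery region — a ball that is equally a $3$--ball in $S^3$ — is realized by handle slides of $K$ over $U$ that exhibit $K$ as a $0$--framed unknot split from $U$. Consequently $L$ is handleslide-equivalent to the $2$--component $0$--framed unlink, and in particular $L$ is stably handleslide trivial, as claimed. It is worth noting that this argument actually produces handleslide triviality on the nose, so the stabilizing unlink allowed in the definition of stable equivalence is not needed here; the weaker conclusion stated in the proposition is all that is required downstream.
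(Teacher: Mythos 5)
First, note that the paper does not prove this proposition at all: it is imported verbatim as Proposition~3.2 of~\cite{GST}, so the comparison here is against the argument in that reference. Your skeleton matches it: the homological normalization (zero framings and zero linking number), passing to a null-homologous knot $\wh K$ in $S^1\times S^2$ by surgering the unknot first, and the closing dictionary that an isotopy of $\wh K$ in the surgered manifold is realized by handle slides of $K$ over $U$ are all correct and are exactly how~\cite{GST} frames the problem; your remark that one gets handleslide triviality on the nose (no stabilization) also agrees with the paper's own comment following the proposition.

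The gap is in the step you flag as the crux, and it is a real one: you propose to get $\wh K$ into a ball by putting a reducing sphere of $\#^2(S^1\times S^2)$ in good position with respect to the surgery solid torus and running innermost-disk and innermost-sphere exchanges, reserving Gabai's theorem only for unknotting $\wh K$ once it sits inside a ball. This is backwards. A reducing sphere in the surgered manifold meets the surgery solid torus in meridian disks of the filling, and no innermost cut-and-paste argument removes those intersections; if such exchanges sufficed, the identical scheme applied to a knot in $S^3$ with an $S^1\times S^2$ surgery would give an elementary proof of Property~R itself, which is precisely the statement that resisted all combinatorial attacks and required Gabai's sutured-manifold/taut-foliation machinery. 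The assertion that $\wh K$ must lie in a ball (equivalently, that its complement in $S^1\times S^2$ contains an essential sphere cutting it off) \emph{is} the deep content of the proposition, and in~\cite{GST} it is obtained by invoking Gabai's extension of Property~R to knots in $S^1\times S^2$, not by general position. Once that theorem is quoted, your remaining reasoning (prime decomposition forces the surgery inside the ball to yield $S^1\times S^2$, then Property~R unknots $\wh K$, then slide $K$ over $U$ to split it off as a $0$--framed unknot) is fine; so the fix is to replace the innermost-exchange paragraph with a citation to the appropriate theorem of Gabai, exactly as the source does.
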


In fact, Proposition 3.2 of~\cite{GST} says something slightly stronger, that such $L$ is handleslide equivalent to a 2-component unlink.  As a corollary of Lemma~\ref{lem:cd_equiv}, Proposition~\ref{prop:AT}, and Proposition~\ref{prop:2R}, we have the following sufficient condition for $D_{c/d}$ to be ribbon.

\begin{proposition}\label{prop:ribboning}
Suppose $\Ll_{c/d}$ contains an unknot.  Then $D_{c/d}$ is ribbon.
\end{proposition}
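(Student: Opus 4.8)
The plan is to realize this proposition as an immediate corollary of the three results just assembled, using the hypothesized unknot to manufacture a two-component R-link to which Proposition~\ref{prop:2R} applies. Write $\Ll_{c/d}$ as a multicurve and let $U\subset\Ll_{c/d}$ be a single component that is unknotted in $S^3$; such a component exists by hypothesis. Since $U$ is a nonempty sublink of $\Ll_{c/d}$ and $c$ is even, the hypotheses of Lemma~\ref{lem:cd_equiv} are met with $L=U$.

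First I would invoke Lemma~\ref{lem:cd_equiv} to conclude that $Q\cup U$ is an R-link that is stably equivalent to $Q\cup\Ll_{c/d}$. Next, observe that $Q\cup U$ is a two-component link one of whose components, namely $U$, is an unknot. Proposition~\ref{prop:2R} then gives that $Q\cup U$ is stably handleslide trivial, that is, stably equivalent to an unlink. Since stable equivalence is transitive, combining this with the previous step shows that $Q\cup\Ll_{c/d}$ is itself stably handleslide trivial.

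Finally, I would apply Proposition~\ref{prop:AT} to the R-link $L=Q\cup\Ll_{c/d}$: because it is stably handleslide trivial, the core of every 2--handle attached along it is a ribbon disk in $B^4$. In particular, the core of the 2--handle attached along the $Q$--component is ribbon, and by the definition given in Section~\ref{subsec:construction} this core is exactly $D_{c/d}$. Hence $D_{c/d}$ is ribbon.

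The argument is essentially a matter of chaining the cited results, so the only points requiring care are bookkeeping ones rather than geometric ones. The load-bearing steps are, first, confirming that stable equivalence is genuinely transitive, so that the stable triviality of $Q\cup U$ transfers along the stable equivalence of Lemma~\ref{lem:cd_equiv} to $Q\cup\Ll_{c/d}$; and second, verifying that Proposition~\ref{prop:AT} yields ribbonness for the specific 2--handle core corresponding to $Q$, not merely for some 2--handle in the decomposition, which is what identifies the resulting ribbon disk with $D_{c/d}$. Both are immediate from the definitions, and I expect no obstacle beyond correctly matching the hypotheses of the three imported statements.
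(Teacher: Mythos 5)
Your proof is correct and follows essentially the same route as the paper, which states Proposition~\ref{prop:ribboning} precisely as a corollary of Lemma~\ref{lem:cd_equiv}, Proposition~\ref{prop:2R}, and Proposition~\ref{prop:AT}: apply Lemma~\ref{lem:cd_equiv} to the unknotted component $U$, use Proposition~\ref{prop:2R} to see $Q\cup U$ is stably handleslide trivial, transfer this to $Q\cup\Ll_{c/d}$, and apply Proposition~\ref{prop:AT} to the 2--handle core along $Q$. Your two flagged bookkeeping points (transitivity of stable equivalence, and that Proposition~\ref{prop:AT} covers \emph{each} 2--handle core, hence the one along $Q$) are exactly the right things to check and both hold as you say.
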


Analysis in forthcoming work~\cite{zupanetal} reveals some curious patterns in the knot types of components of $\Ll_{c/d}$.
In certain cases, the links $\Ll_{c/d}$ can be seen to contain unknotted components.
 
\begin{theorem}[\cite{zupanetal}]
\label{thm:zupanetal}
	Let $Q=Q_{2k+1,2}$ for some $k\in\N$.  For any $m\in\Z$, the link $\Ll_{2/(2m+1)}$ contains an unknotted component. 
\end{theorem}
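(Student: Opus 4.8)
The plan is to produce the curves $\Ll_{2/(2m+1)}$ explicitly from the pillowcase model of Section~\ref{sec:background}, to compute the knot type of each of their $p$ components as a function of the component index, and to isolate one index whose component is the unknot for every $m$. First I would fix coordinates on the pillowcase so that the slopes $\lambda_0$ and $\lambda_\infty$ appear as in Figure~\ref{fig:curves_0}, and draw $\lambda_{2/(2m+1)}$; lifting through the $\Z_{pq}=\Z_{4k+2}$ branched cover $\widehat F\to S$ and applying the reduction of Remark~\ref{rmk:scharlemann} yields the $\Z_{2p}$--equivariant $p$--component multicurve on $\widehat F$. I would then isotope it into $F\subset S^3$, where $F=F_+\natural F_-$ is the boundary connected sum of the fibers of $T_{p,2}$ and $\overline T_{p,2}$, each a linear plumbing of Hopf bands.

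The residual cyclic symmetry $\Z_p$ permutes the $p$ components transitively, so they share the same geometric intersection number with the separating arc $\Lambda_\infty\cap F$; crucially, however, this symmetry is a symmetry of the closed fiber $\widehat F$ and of $Y_Q$, not of $S^3$, so the components are free to represent distinct knot types once pushed into $S^3$. Recording each component as a word in the edges of the annular representation of $\widehat F$ makes the dependence of its knot type on the index $j\in\Z/p$ and on $m$ visible, and I expect these to be exactly the ``curious patterns'' referred to in the statement. The heart of the argument is then to recognize the extremal component $L_j$—the one threading the band of the plumbing nearest a chosen pair of cone points—as an unknot.

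Here I would use that $\Ll_{2/(2m+1)}$ is only well defined up to slides over $Q=\partial F$ (cf.\ Lemma~\ref{lem:cd_equiv}), together with the decomposing sphere $S$ of Section~\ref{sec:symmetries}: since $L_j$ lies on $F$ and $S\cap F$ is the separating arc $\Lambda_\infty\cap F$, the intersection $L_j\cap S$ is governed by the crossings of $L_j$ with this arc. Cutting $F$ along $\Lambda_\infty\cap F$ splits $L_j$ into arcs lying on the two plumbings $F_+$ and $F_-$, and the goal is to show that for the extremal index these arcs are all boundary-parallel, forming trivial tangles in $B_\pm$ and forcing $L_j$ to be unknotted. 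The base case $c=0$, where the components of $\Ll_0$ lie in single Hopf bands and bound the product ribbon disk $D_0$ of part~(5) of Theorem~\ref{thm:MZ_disks}, provides the model for what triviality should mean.

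I expect the last step to be the main obstacle: $F$ is the fiber of the genus-$2k$ knot $Q$, and curves carried by it generically represent nontrivial (often torus or cable) knots, so establishing that the particular arcs of the extremal component are boundary-parallel—uniformly in both $m$ and $k$—is delicate. The cleanest route is probably to put each component into a normal form on the plumbing and read off its knot type directly, confirming both the nontrivial values at the non-extremal indices (which explain why only \emph{some} component is unknotted) and the triviality at the extremal index. Verifying that the simplifying isotopies can be carried out in $S^3$, rather than merely on $F$ or in $Y_Q$, is where the care is required.
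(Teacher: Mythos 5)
A preliminary note: this statement is not proved in the paper at all---it is imported from the forthcoming work \cite{zupanetal}, and the present paper only uses it as a black box (e.g.\ in the proofs of Theorem~\ref{thm:ribbon} and Proposition~\ref{prop:rib_pres}). So there is no in-paper argument to compare yours against; your proposal has to stand on its own as a proof, and it does not yet do so.

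The gap is twofold. First, the central claim of the theorem---that a specific (``extremal'') component of $\Ll_{2/(2m+1)}$ is unknotted, uniformly in $m$ and $k$---is never established. Your write-up correctly sets up the pillowcase lift, the reduction of Remark~\ref{rmk:scharlemann}, and the observation that the $\Z_{2p}$--symmetry of $\widehat F$ permutes the components without forcing them to have the same knot type in $S^3$ (indeed they do not: the paper notes $V_{2/3}^3$ is the stevedore knot). But at the decisive moment you write that recognizing the extremal component as an unknot is ``the main obstacle,'' to be handled by a normal form you ``expect'' to exist. That step \emph{is} the theorem; deferring it leaves a plan, not a proof. Second, the unknotting criterion you propose is logically insufficient as stated: you cut $S^3$ along the decomposing sphere $S$ and aim to show that the arcs of $L_j$ in $B_+$ and $B_-$ are boundary-parallel, ``forming trivial tangles in $B_\pm$ and forcing $L_j$ to be unknotted.'' A knot that meets a sphere in two trivial tangles is merely a knot in bridge position---every knot admits such a decomposition---so triviality of both tangles forces unknottedness only when $L_j$ meets $S$ in exactly two points. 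For $m\neq 0$ the slope $\lambda_{2/(2m+1)}$ meets $\lambda_\infty$ in more than one point, so the components of the lift meet the separating sphere in more than two points, and your criterion cannot close the argument. You would need either to reduce the intersection with $S$ to two points by isotopy (which must be justified) or to replace this step with a genuine computation of the knot type, e.g.\ a normal form for curves carried by the plumbing---which is exactly the content that \cite{zupanetal} supplies and that is missing here.
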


Theorem~\ref{thm:zupanetal} and Proposition~\ref{prop:ribboning} can be combined to give a short and immediate proof of Theorem~\ref{thm:ribbon}: If $Q = Q_{2k+1,2}$, every disk of the form $D_{2/2m+1}$ bounded by $Q$ is ribbon.

\subsection{The disks $D_{2/(2m+1)}$ for $Q_{3,2}$}
\label{subsec:c=2}
\

In this subsection, we describe ribbon presentations for the ribbon disks $D_{2/2m+1}$ from Theorem~\ref{thm:ribbon} in the case $k=1$, so $Q=Q_{3,2}$.
By Proposition~\ref{prop:null}, there are involutions of $B^4$ relating $D_{c/d}$ and $D_{-c/d}$, so we restrict our attention to $m\geq 0$.  Consider the collection $\Ll_{2/1}$ of three curves in $\widehat F$, as shown in Figure~\ref{fig:curves_2}(B).
Let $V_{2/1}^1$ and $V_{2/1}^2$ denote the two curves that are disjoint from $Q$, and let $L_{2/1}$ be the union of these curves, considered as a link in $F$ in $S^3$.
It is straightforward to check that $L_{2/1}$ is an unlink, and $L_{2/1}$ cuts $F$ into a 4--punctured disk bounded by $Q$.  Thus, there is a sequence of handleslides of $Q$ over components of $L_{2/1}$ converting $Q$ to a trivial curve in $F \setminus L_{2/1}$.
By the proof of Proposition~\ref{prop:AT}, these handleslides can be used to construct a ribbon presentation for the disk $D$ bounded by $Q$ arising from the R-link $Q \cup L_{2/1}$, and by Lemma~\ref{lem:cd_equiv}, $D$ is isotopic to the disk $D_{2/1}$.  This construction yields a set $\frak b_{2/1}$ of three bands that are embedded in $F\setminus\nu(L_{2/1})$ with their endpoints attached along $Q$.
These are shown in Figure~\ref{fig:bands}(A).

\begin{figure}[ht!]
	\centering
	\includegraphics[width=.9\textwidth]{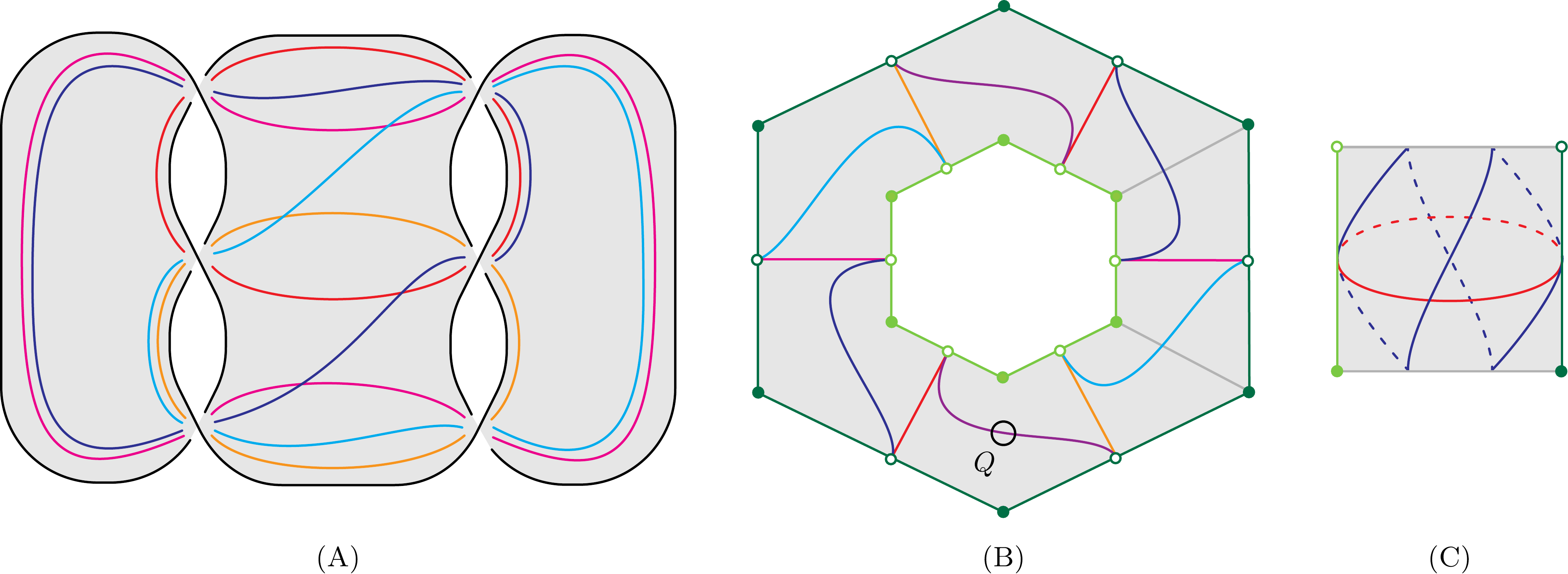}
	\caption{Obtaining the unlink derivative link $L_{2/1}$ for $Q$ in $F$.}
	\label{fig:curves_2}
\end{figure}

\begin{figure}[ht!]
	\centering
	\includegraphics[width=.9\textwidth]{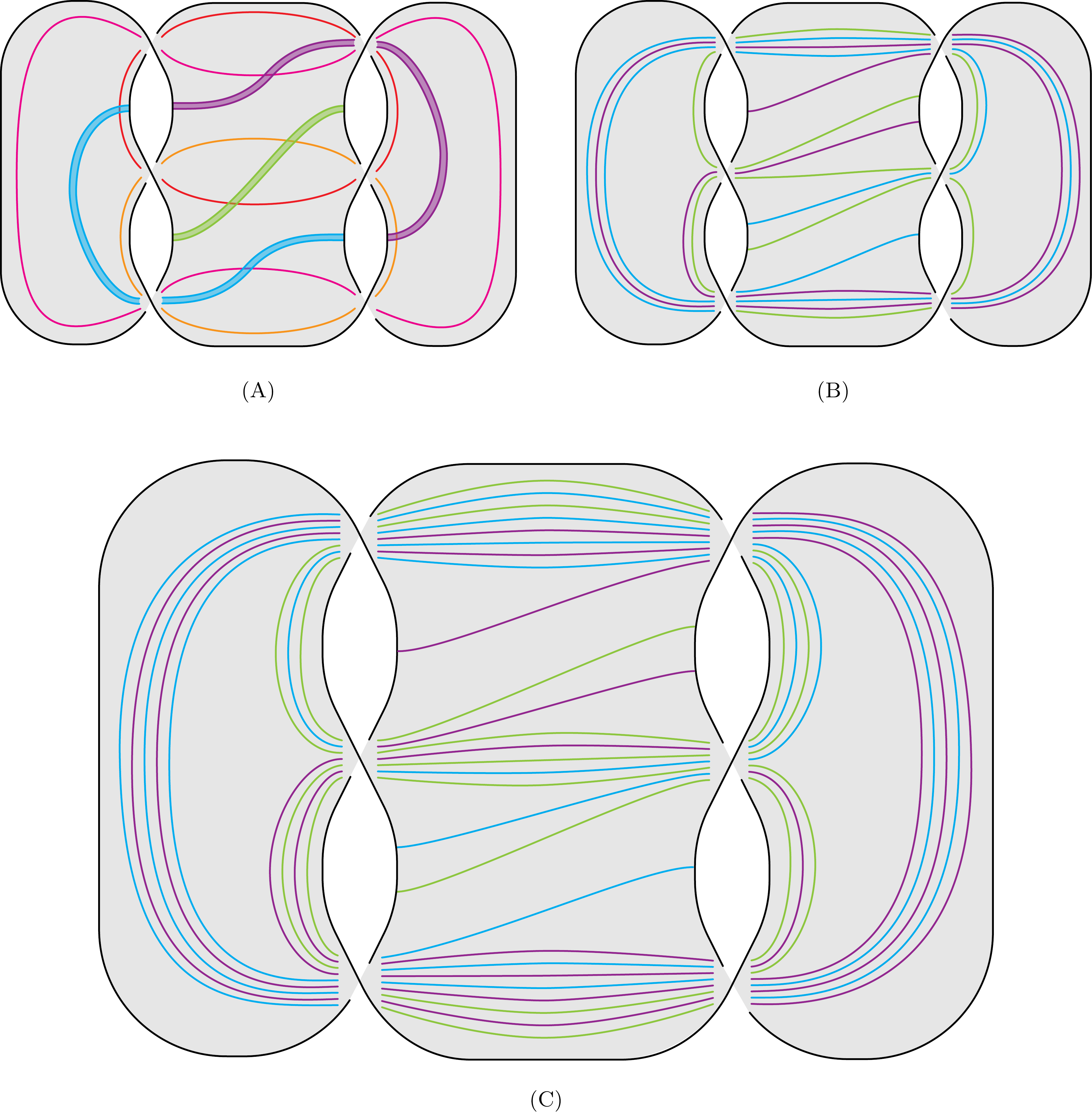}
	\caption{Ribbon presentations for the disks (\textsc A) $D_{2/1}$, (\textsc B) $D_{2/3}$, and (\textsc C) $D_{2/5}$ bounded by the square knot $Q_{3,2}$.
	In (\textsc B) and (\textsc C) only the cores of the bands are drawn; the bands are framed by the surface.}
	\label{fig:bands}
\end{figure}

Let $\tau_0$ denote a right-handed Dehn twist of $F$ along the three curves of $\Ll_0$.
Let
$$\frak b_{2/(2m+1)} = \tau_0^m(\frak b_{2/1})$$
denote the image of the bands.

\begin{proposition}
\label{prop:rib_pres}
	For any $m\geq 0$, the pair $(Q,\frak b_{2/(2m+1)})$ is a ribbon presentation for the ribbon disk~$D_{2/(2m+1)}$.
\end{proposition}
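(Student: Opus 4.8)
The plan is to prove the statement by transporting the ribbon presentation of the base case $m=0$ (constructed in the paragraphs preceding the statement) up the family using the fiber twist $\tau_0$. Two things must be checked: that $\tau_0$ moves the derivative data for slope $2/(2m+1)$ to that for slope $2/(2m+3)$, so that the transported bands present the correct disk, and that band surgery along the transported bands still yields an unlink, so that they form a genuine ribbon presentation.

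First I would pin down the action of $\tau_0$ on slopes. Because the reduced multicurve $\Ll_0$ is $\Z_{2p}$--equivariant (Remark~\ref{rmk:scharlemann}), the Dehn twist $\tau_0$ descends to the pillowcase $S$ as a twist fixing the slope $\lambda_0$; computing exactly as in Proposition~\ref{prop:null}, its action on slopes is the parabolic $c/d \mapsto c/(c+d)$, with the right-handed convention chosen so that $d$ increases. Lifting back to $\widehat F$ gives $\tau_0^m(\Ll_{c/d}) = \Ll_{c/(d+mc)}$ up to isotopy and slides over $Q$, and in particular $\tau_0^m(L_{2/1}) = L_{2/(2m+1)}$. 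Since $\tau_0$ is supported in the interior of $F$, it fixes $\partial F = Q$ pointwise, so $\tau_0^m(Q)=Q$ and $\tau_0^m$ carries $\frak b_{2/1}$ to $\frak b_{2/(2m+1)}$ as bands in $F$ with endpoints on $Q$.

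Granting this, the transport is immediate: $\tau_0^m$ is a diffeomorphism of the pair $(F,Q)$ fixing $Q$, so it carries the whole surface configuration used to build $\frak b_{2/1}$ --- the unlink $L_{2/1}$, the complementary $4$--punctured disk, and the handleslides of $Q$ over $L_{2/1}$ trivializing $Q$ in $F\setminus L_{2/1}$ --- to the analogous configuration for $L_{2/(2m+1)}$, all of which is intrinsic to the marked surface. Provided $L_{2/(2m+1)}$ is again an unlink, the proof of Proposition~\ref{prop:AT} then runs verbatim and produces from these handleslides a ribbon presentation for the core of the $2$--handle attached along $Q$ in $B_{Q\cup L_{2/(2m+1)}}$; that presentation is exactly $(Q,\frak b_{2/(2m+1)})$. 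Finally, Lemma~\ref{lem:cd_equiv} identifies this core disk with $D_{2/(2m+1)}$, since $Q\cup L_{2/(2m+1)}$ is stably equivalent to $Q\cup\Ll_{2/(2m+1)}$ by slides preserving $Q$.

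The hard part is the proviso: that $L_{2/(2m+1)}=\tau_0^m(L_{2/1})$ remains an unlink. Unlinkedness is an extrinsic property of the embedding $F\hookrightarrow S^3$, and $\tau_0$ does not preserve it --- indeed $\tau_0$ is realized by no ambient diffeomorphism of $(S^3,Q)$ at all, since such a diffeomorphism would lie in $\Sym(S^3,Q)$ and, by Proposition~\ref{prop:sym} together with Proposition~\ref{prop:null}, could only act on slopes by maps fixing the denominator $d$, whereas $\tau_0$ sends $d\mapsto d+c$ (this is the same obstruction that makes $D_{2/1}$ and $D_{2/3}$ non-isotopic in Theorem~\ref{thm:class}). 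Thus the transport above shows only that band surgery along $\frak b_{2/(2m+1)}$ yields $\tau_0^m$ of the $m=0$ unlink, and the entire content is to verify that this image is still an unlink. I would attack this either by realizing $\tau_0$ as a sequence of annulus twists supported on disks bounded by the components of $\Ll_0$, in the sense of Abe and Tange, and checking that each twist carries $L_{2/(2m-1)}$ to the unlink $L_{2/(2m+1)}$ (reducing the problem, by induction on $m$, to controlling a single twist); or by extracting the unknottedness and unlinkedness of the components of $L_{2/(2m+1)}$ directly from the component analysis of the links $\Ll_{c/d}$ in~\cite{zupanetal}, the same source that supplies the unknotted component for Theorem~\ref{thm:zupanetal}. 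Establishing this extrinsic unlinkedness is the crux; the rest is bookkeeping given the slope computation and Lemma~\ref{lem:cd_equiv}.
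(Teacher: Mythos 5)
Your overall architecture is the same as the paper's: transport the $m=0$ configuration by $\tau_0^m$ (using $\widehat\tau_0^m(\Ll_{2/1})=\Ll_{2/(2m+1)}$, which the paper quotes from~\cite[Subsection~4.3]{MeiZup_22}), observe that $F\setminus L_{2/(2m+1)}$ is again a $4$--punctured disk so the handleslides of $Q$ persist, run the proof of Proposition~\ref{prop:AT} to extract a ribbon presentation with bands $\frak b_{2/(2m+1)}$, and identify the resulting disk with $D_{2/(2m+1)}$ via Lemma~\ref{lem:cd_equiv}. You also correctly diagnose the one step this transport cannot supply: $\tau_0$ is a twist of the fiber, not an ambient diffeomorphism, so the unlinkedness of $L_{2/(2m+1)}=\tau_0^m(L_{2/1})$ in $S^3$ must be established separately.

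But that is exactly where your proposal stops: you name two candidate strategies (annulus twists in the style of Abe--Tange, or the component analysis of~\cite{zupanetal}) without carrying either out, so the argument is incomplete at precisely the point you call the crux. Note also that Theorem~\ref{thm:zupanetal} as stated only furnishes \emph{one} unknotted component of $\Ll_{2/(2m+1)}$, so your strategy (b) cannot finish the job by itself. The paper closes the gap in three short moves: first, by the proof of Theorem~\ref{thm:zupanetal}, one of the two components $V^1_{2/(2m+1)}$, $V^2_{2/(2m+1)}$ is an unknot; second, the action of the monodromy $\varphi$ shows that these two components co-bound an annulus in $S^3$, hence are isotopic, so both are unknots; third, since their linking number is zero, the annulus exhibits one component as the $0$--framed pushoff of the other, unknotted, component, so the pair bounds disjoint disks and $L_{2/(2m+1)}$ is an unlink. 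With that supplied, the rest of your argument goes through essentially as written.
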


\begin{proof}
	First, note that $\tau_0$ extends to a map $\widehat\tau_0$ of $\widehat F$ and that this map satisfies $\widehat\tau_0^m(\Ll_{2/1}) = \Ll_{2/(2m+1)}$~\cite[Subsection~4.3]{MeiZup_22}.
	Let $V_{2/(2m+1)}^i = \tau_0(V_{2/1}^i)$ for $i=1,2$, and let $L_{2/(2m+1)} = V_{2/(2m+1)}^1 \cup V_{2/(2m+1)}^2$, so that that $L_{2/(2m+1)}$ is a sublink of $\Ll_{2/(2m+1)}$.
	By the proof of Theorem~\ref{thm:zupanetal}, one of the knots $V_{2/(2m+1)}^i$ is an unknot, and according to the action of the monodromy $\varphi$, $V_{2/(2m+1)}^1$ and $V_{2/(2m+1)}^2$ co-bound an annulus in $S^3$, so they are isotopic.
	As their linking number is zero, $L_{2/(2m+1)}$ is an unlink.  As above, $F \setminus L_{2/(2m+1)}$ is a 4--punctured disk bounded by $Q$, and handleslides of $Q$ over $L_{2/(2m+1)}$ in this surface convert $Q$ to a trivial curve.  These handleslides in turn yield a presentation for a ribbon disk $D$ bounded by $Q$, described as the result of attaching the bands $\frak b_{2/(2m+1)}$ to $Q$, yielding two parallel copies of the unlink $L_{2/(2m+1)}$, and capping off this 4--component unlink off with four trivial disks in $B^4$.  By Lemma~\ref{lem:cd_equiv}, this disk is isotopic to $D_{2/2m+1}$, completing the proof.
\end{proof}

\begin{example}
\label{ex:bands}
	The ribbon presentations $(Q_{3,2},\frak b_{2/1})$, $(Q_{3,2},\frak b_{2/3})$, and $(Q_{3,2},\frak b_{2/5})$ for the ribbon disks $D_{2/1}$, $D_{2/3}$, and $D_{2/5}$ bounded by $Q_{3,2}$ are shown in Figure~\ref{fig:bands}, and Proposition~\ref{prop:rib_pres} gives a recipe for writing down presentations for other disks $D_{2/(2m+1)}$.  Note that $D_{2/1}$ is the 1--twist ribbon disk for $Q_{3,2}$, so it is isotopic to the product ribbon disk $D_0$.  By Theorem~\ref{thm:class}, the ribbon presentations $(Q_{3,2},\frak b_{2/(2m+1)})$ describe an infinite family of pairwise non-isotopic ribbon disks for $Q_{3,2}$.
\end{example}

\subsection{The disk $D_{4/5}$ for $Q_{3,2}$}\

We now consider the case of the disk $D_{4/5}$ bounded by $Q=Q_{3,2}$, which we determine to be a ribbon disk.  Recall that the multi-curve $\Ll_{c/d}$ is well-defined up to isotopy in $\widehat F$, but only up to slides over $Q$ on the Seifert surface $F$.
Until now, this has not had much bearing; $\Ll_0$ has a canonical positioning in $F$, as shown in Figures~\ref{fig:curves_0}(A) and~\ref{fig:curves_2}(A), and we focused on two curves $V_{2/1}^1$ and $V_{2/1}^2$ of $\Ll_{2/1}$ in the previous subsection in a way that sidestepped the issue.
Even there, however, there was a third curve in $\Ll_{2/1}$ lacking a canonical position in $F$.
This curve is shown as intersecting $Q$ in Figure~\ref{fig:curves_2}(B) and must be pushed off of $Q$ to one side or the other to be regarded in $F$.
This issue presents itself in a more relevant way when $c/d=2/3$; see Figure~\ref{fig:curves_3}.
Let $V_{2/3}^3$ denote the component of $\Ll_{2/3}$ in $\widehat F$ that intersects the sub-surface $F$ in an arc.
This arc is shown in Figure~\ref{fig:curves_3}(C), along with the rest of $\Ll_{2/3}$.
Isotoping $V_{2/3}^3$ in $\widehat F$ to be disjoint from $Q$ gives the curve in $\widehat F$ shown in Figure~\ref{fig:45_links}(B), which will be relevant below and will still be denoted $V_{2/3}^3$.

\begin{figure}[h!]
	\centering
	\includegraphics[width=.9\textwidth]{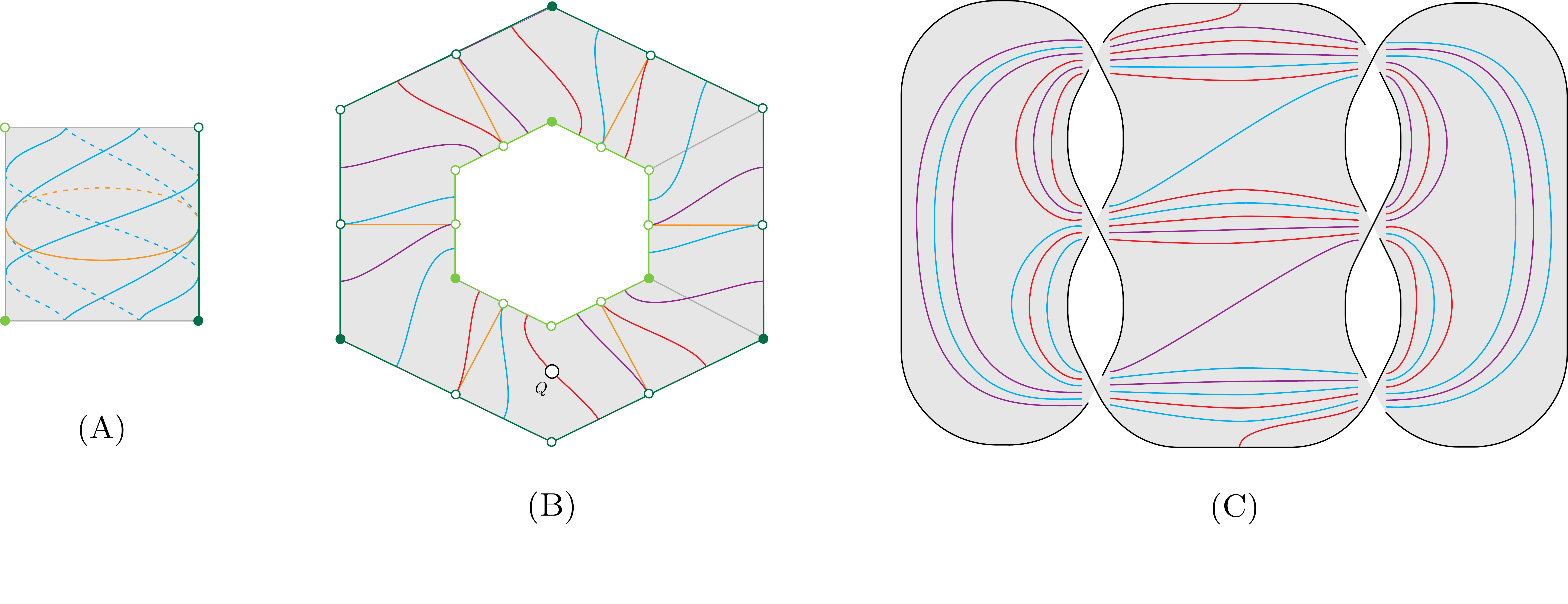}
	\caption{The intersection of the multi-curve $\Ll_{2/3}$ with the sub-surface $F$ in $\widehat F$.}
	\label{fig:curves_3}
\end{figure}

We are now ready to prove that $D_{4/5}$ is ribbon.

\begin{repproposition}{prop:4/5}
	The disk $D_{4/5}$ bounded by $Q_{3,2}$ is ribbon.
\end{repproposition}

\begin{proof}
	Let $V_{4/5}^1$ and $V_{4/5}^2$ denote the components of $\Ll_{4/5}$ in $F$ shown in Figure~\ref{fig:45_links}(A).
	(We leave to the intrepid reader the details of how these two curves in $F$ are obtained by lifting the slope $\lambda_{4/5}$ to $\widehat F$ and isotoping the resulting multi-curve so two components lie in $F$.)
	Let $L_{4/5} = V_{4/5}^1\cup V_{4/5}^2$.  By Lemma 5.1 of~\cite{MeiZup_22}, the link $Q \cup \Ll_{4/5}$ is stably equivalent to $Q \cup L_{4/5}$, \and by the discussion above, slides of $Q$ over $L_{4/5}$ convert $Q$ to a trivial curve in $F$. Thus by Proposition~\ref{prop:AT}, it suffices to show that $L_{4/5}$ is stably handleslide trivial.
	
	Now, let $V_{2/3}^3$ denote the component of $\Ll_{2/3}$ in $F$ that was discussed above and is shown in Figure~\ref{fig:45_links}(B), and consider $L_{2/3}' = V_{2/3}^3\cup Q$.  By Lemma~\ref{lem:cd_equiv}, Proposition~\ref{prop:2R}, and Theorem~\ref{thm:zupanetal}, the link $L_{2/3}'$ is stably handleslide trivial.  Finally, by Lemma~\ref{lem:isotopic} (below), $L_{4/5}$ and $L_{2/3}'$ are isotopic, and so $L_{4/5}$ is stably handleslide trivial as well.
\end{proof}

\begin{figure}[h!]
	\centering
	\includegraphics[width=.9\textwidth]{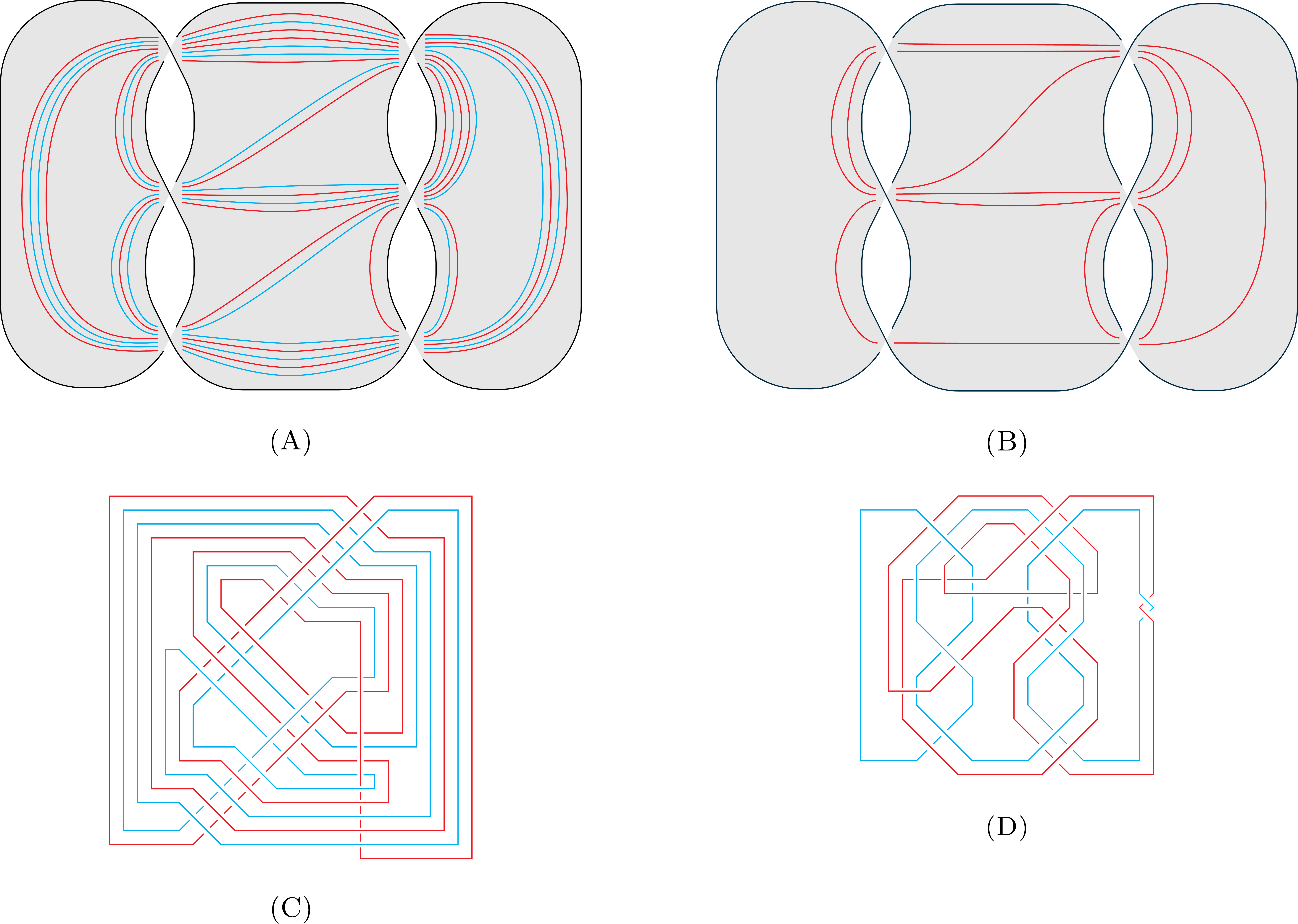}
	\caption{A pair of isotopic links. The link $L_{4/5} = V_{4/5}^1\cup V_{4/5}^2$ is shown in $F$ in (A) and isolated in space in (C). The link $L_{2/3}' = Q\cup V_{2/3}^3$ is shown with $V_{2/3}^3$ lying in the Seifert surface for $Q$ in (B) and isolated in space in (D).}
	\label{fig:45_links}
\end{figure}

We note in passing that $V_{2/3}^3 = V_{4/5}^1$ is a the stevedore knot $\mathbf{6_1}$.
The final ingredient is a computer-aided verification using SnapPy~\cite{snappy} that the links $L_{4/5}$ and $L_{2/3}'$ are isotopic.

\begin{lemma}
\label{lem:isotopic}
	The links $L_{4/5}$ and $L_{2/3}'$ are isotopic.
\end{lemma}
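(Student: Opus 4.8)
The plan is to verify the claimed isotopy by a rigorous, computer-assisted comparison of the two link exteriors, and then to promote the resulting homeomorphism of exteriors to an ambient isotopy in $S^3$. First I would extract explicit planar diagrams for $L_{4/5}$ and $L_{2/3}'$ directly from Figure~\ref{fig:45_links}(C) and (D), recording each as a PD-code (equivalently, as a Gauss or DT code). Feeding these into SnapPy produces triangulated cusped $3$--manifolds $M = S^3\setminus\nu(L_{4/5})$ and $M' = S^3\setminus\nu(L_{2/3}')$, each carrying its meridian--longitude peripheral framing. I expect both links to be hyperbolic, which one confirms by verifying that SnapPy finds a geometric solution to the gluing equations with all tetrahedra positively oriented; using the verified-computation routines (\texttt{verify}) this can be certified rather than left as a floating-point assertion, so that by Mostow rigidity the hyperbolic structures become canonical invariants of the exteriors.

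Next I would compute a canonical invariant of each exterior --- for instance the isometry signature \texttt{isometry\_signature} --- and check that the two agree. SnapPy's \texttt{is\_isometric\_to} routine then returns an explicit isometry $h\colon M\to M'$, which the verified routines certify to be genuine rather than an artifact of numerical error. Since the links are hyperbolic, such an isometry is precisely a homeomorphism of exteriors, and I would record how $h$ permutes the cusps and how it acts on each peripheral torus. The observation that $V_{2/3}^3 = V_{4/5}^1$ is the stevedore knot $\mathbf{6_1}$ helps pin down the cusp correspondence: it forces $h$ to match these two components, and hence to match $V_{4/5}^2$ with $Q$.

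The key theoretical step, and the main obstacle, is upgrading this homeomorphism of exteriors to an isotopy of links, since homeomorphic link exteriors need not arise from isotopic links. I must therefore check two things about the isometry $h$: that it is orientation-preserving, and that it carries the meridian of each component of $L_{4/5}$ to the meridian of the corresponding component of $L_{2/3}'$; both can be read off from the peripheral-curve data (\texttt{cusp\_info}) attached to the returned isometry. Once the meridional slopes match, $h$ extends across the meridional Dehn fillings to an orientation-preserving homeomorphism $H\colon(S^3,L_{4/5})\to(S^3,L_{2/3}')$. Because every orientation-preserving self-homeomorphism of $S^3$ is isotopic to the identity, composing such an isotopy with $L_{4/5}$ exhibits the desired ambient isotopy carrying $L_{4/5}$ to $L_{2/3}'$. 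The delicate points are ensuring the computed isometry respects both orientation and meridians, and carrying out the entire comparison with certified interval arithmetic so that the conclusion is a genuine proof rather than a numerical coincidence.
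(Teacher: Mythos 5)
Your proposal is correct and follows essentially the same route as the paper: both verify via SnapPy that the two link exteriors are hyperbolic and admit an isometry preserving meridians, then conclude the links are isotopic. You spell out more carefully the steps the paper leaves implicit (certified computation, orientation-preservation, extending across the meridional fillings, and isotoping the resulting self-homeomorphism of $S^3$ to the identity), but the underlying argument is the same.
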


\begin{proof}
	Let $M_{4/5}$ and $M_{2/3}'$ denote the exteriors of $L_{4/5}$ and $L_{2/3}'$, respectively.
	We describe these links to SnapPy using the Plink editor, and SnapPy verifies that both $M_{4/5}$ and $M_{2/3}'$ are hyperbolic with volume approximately 13.5521190893.
	In fact, using the command
	\begin{center}
		\verb|M.is_isometric_to(M', return_isometries = True)|
	\end{center}
	we find that there is a unique isometry between these hyperbolic manifolds and that it preserves meridians.
	It follows that the links are isotopic.
\end{proof}

\begin{remark}
\label{rmk:alt}
An alternative proof of this fact follows from the sketch in~\cite{GST} that the link $L_{2,1}$ (in that paper) is handleslide trivial, combined with Theorem 4.1 of~\cite{Sch_16} asserting that $L_{2,1}$ is isotopic to one of the links of the form $Q \cup V^i_{2/5}$.
\end{remark}


\newpage

\bibliographystyle{amsalpha}
\bibliography{Q_disks.bib}

\end{document}